\theoremstyle{plain}
\newtheorem{thm}{Theorem}[section]
\newtheorem{cor}[thm]{Corollary}
\newtheorem{lem}[thm]{Lemma}
\newtheorem{prop}[thm]{Proposition}
\newtheorem{rem}[thm]{Remark}
\newtheorem{ques}[thm]{Question}
\newtheorem{conj}[thm]{Conjecture}
\def\cal{\mathcal}
\def\bbb{\mathbb}
\def\op{\operatorname}
\renewcommand{\phi}{\varphi}
\newcommand{\R}{\bbb{R}}
\newcommand{\N}{\bbb{N}}
\newcommand{\Z}{\bbb{Z}}
\newcommand{\Q}{\bbb{Q}}
\newcommand{\bs}{\backslash}
\begin{document}
\title[On consecutive 1's in continued fractions expansions]{On consecutive 1's in continued fractions expansions of square roots of prime numbers}
\author{Piotr Miska and Maciej Ulas}

\keywords{prime numbers, continued fractions, period, experimental mathematics, numerical computation} \subjclass[2010]{}
\thanks{The research of the authors was supported by the Polish National Science Centre grants: UMO-2018/29/N/ST1/00470 (PM) and UMO-2012/07/E/ST1/00185 (MU)}

\begin{abstract}
In this note, we study the problem of existence of sequences of consecutive 1's in the periodic part of the continued fractions expansions of square roots of primes. We prove unconditionally that, for a given $N\gg 1$, there are at least $N\log^{-3/2}N$ prime numbers $p\leq N$ such that the continued fraction expansion of $\sqrt{p}$ contains three consecutive 1's on the beginning of the periodic part. We also present results of our computations related to the considered problem and some related problems, formulate several open questions and conjectures and get some results under the assumption of Hypothesis H of Schinzel.
\end{abstract}

\maketitle

\section{Introduction}\label{section1}
Let $\N$ be the set of non-negative integers, $\N_{+}$ denotes the set of positive integers and $\mathbb{P}$ be the set of prime numbers.

Let $x\in\R\setminus\Q$. It is well known that the number $x$ can be written in the form of infinite continued fraction
$$
x=a_{0}+\cfrac{1}{a_{1}+\cfrac{1}{a_{2}+\cfrac{1}{a_{3}+\cfrac{1}{\ddots}}}}=[a_{0};a_{1},a_{2},\ldots],
$$
where the digits $a_{0},a_{1},\ldots $ can be recursively computed in the following way
$$
\alpha_0=x,\;a_{0}=\lfloor x_{0}\rfloor,\quad \alpha_{k+1}=\frac{1}{\alpha_{k}-a_{k}},\quad a_{k}=\lfloor \alpha_{k}\rfloor.
$$
It is clear that $a_{0}\in\Z$ and $a_{i}\in\N_{+}$ for $i\in\N_{+}$. The number $a_{i}$ is called the $i$-th digit (or $i$-th partial quotient) of the continued fraction for $x$. Moreover, writing $x=[a_{0};a_{1},a_{2},\ldots,a_{k-1},\alpha_{k}]$, then
$$
x=\frac{p_{k-1}\alpha_{k}+p_{k-2}}{q_{k-1}\alpha_{k}+q_{k-2}},
$$
where
\begin{align*}
\begin{array}{lll}
  p_{-1}=1, & p_{0}=p_0(a_0)=a_{0}, & p_{k}=p_k(a_0,a_1,...,a_k)=a_{k}p_{k-1}+p_{k-2}, \\
  q_{-1}=0, & q_{0}=1,     & q_{k}=q_k(a_1,...,a_k)=a_{k}q_{k-1}+q_{k-2}.
\end{array}
\end{align*}
For given $x$, the rational number $p_{k}/q_{k}$ is called the $k$-th convergent of the continued fraction for $x$.

Let us recall that the continued fraction $[a_{0};a_{1},a_{2},\ldots]$ is called periodic if there exists $T\in\N_{+}$ such that for all sufficiently large $n$ we have $a_{n}=a_{n+T}$. The smallest $T$ with this property is called the period of the continued fraction. Then, we write $[a_{0};a_{1},a_{2},\ldots,a_{n-1},\overline{a_{n},\ldots,a_{n-1+T}}]$. One of the classical results in the theory of continued fractions is Lagrange theorem, which says that the irrational number $x$ has periodic continued fraction if and only if $x$ is quadratic irrational. In particular, for any given non-square $D\in\N_{+}$ the continued fraction for the number $\sqrt{D}$ is periodic. Moreover, periodic fraction for $\sqrt{D}$ can be written in the following special form $\sqrt{D}=[[\sqrt{D}];\overline{a_{1},\ldots,a_{n-1},2[\sqrt{D}]}]$ and the sequence $(a_{1},\ldots,a_{n-1})\in\N_{+}^{n}$ is palindromic, i.e., $a_{i}=a_{n-i}$ for $i\in\{1,\ldots,n-1\}$. However, let us note that not all continued fractions of this form represent square roots of integers. Indeed, we have $[1, \overline{1, 3, 1, 2}]=4\sqrt{5}/5$. In the sequel, for $D\in\N_{+}$, we denote by $T_{D}$ the length of the period of continued fraction expansion of $\sqrt{D}$ and by $I_{D,k}=(a_1,\ldots,a_k)$, where $a_1,...,a_k$ are the first $k$ values appearing in the period of continued fraction expansion of $\sqrt{D}$.

The behaviour of digits of continued fraction expansions of irrational numbers is of great interests. Many of basic questions remains unsolved. Probably, the most famous is the following: Let $x$ be a non-quadratic algebraic irrationality. Are the set of digits for the continued fraction expansion for $x$ unbounded? We don't know the answer even in the case of $x=\sqrt[3]{2}$.

On probabilistic ground one can obtain quite precise information about the $i$-th digit in continued fraction expansions of irrational numbers. More precisely, Gauss conjectured that as $i\rightarrow+\infty$ the probability that the $i$-th digit equals $k$ converges to $\log_{2}\left(1+\frac{1}{k(k+2)}\right)$. This conjecture was proved by Kuzmin in 1928 \cite{Kuz}. In particular, the digit which is most probably to occur in continued fraction expansions of irrational numbers is equal to 1. Moreover, further calculations reveals that the probability that the first digit of continued fraction expansion is equal to 1, is $1/2$. This rises an interesting question about behaviour of digits in sequences of algebraic numbers. More precisely, in this paper we are interested in the existence of sequences of consecutive 1's in continued fraction expansions of square roots of prime numbers. As a byproduct of his work on representability of primes by certain quadratic forms, Ska{\l}ba showed, under assumption of certain unproven conjectures, the existence of infinitely many primes $p$ with two consecutive 1's in the continued fraction expansion of $\sqrt{p}$ \cite{Skalba}. However, Vinogradov proved that the set of fractional parts of square roots of primes is dense in the interval $(0,1)$ \cite{Vin}. In fact, an even stronger result is true: the set of fractional parts of square roots of prime numbers is equidistributed in the interval $(0,1)$ (see \cite[Exercise 2, p. 348 and a comment after Theorem 21.3]{IwaKow}). We believe that the problem is still interesting and motivated by Ska{\l}ba observation, we were interested in finding an infinite and easy to describe set of prime numbers, say $A$, with such a property that for each $p\in A$, the first three digits of the periodic part of the continued fraction expansion of $\sqrt{p}$ are equal to 1, i.e., $I_{p,3}=(1,1,1)$. We obtain the required set as the set of prime values of certain nonhomogeneous quadratic polynomial in two variables (and prove that in our set there is no $p$ satisfying $I_{p,4}=(1,1,1,1)$).

In the sequel we will also need the following famous statement from \cite{SchiSier}.

\begin{conj}[Hypothesis H from \cite{SchiSier}]
If $f_{1}(x),\ldots, f_{s}(x)\in\Z[x]$ are irreducible polynomials with positive leading coefficients and such that no integer $n>1$ divides $f_{1}(x),\ldots, f_{s}(x)$ for all integers $x$. Then there are infinitely many $x\in\N_{+}$ such that $f_{1}(x),\ldots, f_{s}(x)$ are simultaneously prime.
\end{conj}

Hypothesis H has many unexpected applications in number theory and arithmetic algebraic geometry. Many of these applications were presented in the classical paper of Schinzel and Sierpi\'{n}ski \cite{SchiSier}. For modern applications see for example \cite{Swi}. As we will see it can be also applied in the realm of continued fractions and allow to get some interesting results.

Let us describe the content of the paper in some details. In Section \ref{section2} we construct (inhomogenous) quadratic polynomial $D(t,d)\in\Z[t,d]$ such that for each prime represented by $D$ for some $t, d\in\N_{+}$ we have $I_{D(t,d),3}=(1,1,1)$ and $I_{D(t,d),4}\neq (1,1,1,1)$. Moreover, invoking one of Iwaniec's results, we show that there are at least $\gg N\log^{-3/2}N$ primes $\leq N$ satisfying these conditions.

In Section \ref{section3} we obtain exact expressions for squares of certain continued fractions and prove several results under Hypothesis H. In particular, we prove that the set $\{L_{1}(m)/T_{p_{m}}:\;m\in\N_{+}\}$, where $L_{1}(m)$ is the number of 1's in the periodic part of the continued fraction expansion of $\sqrt{p_{m}}$, is dense in $[0,1]$.

In Section \ref{section4} we prove some results based on the equidistribution property of the sequence of fractional parts of square roots of prime numbers.

Finally, in last section we present results of our computer experiments. In particular, for each $k\in\{1,\ldots, 20\}$ we present the smallest prime number $p$ such that $I_{p,k}=(1,\ldots,1)$, where in the bracket we have $k$-occurrences of 1. Moreover, for some values of $k$, we give the smallest prime number $p$ such that $T_{p}=k$ and there is no digit 1 in the continued fraction expansion of $\sqrt{p}$. Moreover, based on results of our computations we state some further questions and conjectures.

\section{First results}\label{section2}




Before we state the main result of this section we will give simple characterization of integers $D\in\N_{+}$ satisfying the conditions $T_{D}\leq 8$ and $I_{D,2}=(1,1)$.

\begin{lem}\label{simplelem}
\begin{enumerate}
\item There is no $D\in\N_{+}$ such that $\sqrt{D}=[a;\overline{1,1,2a}]$ for some $a\in\N_{+}$.
\item If $D\in\N_{+}$ and $\sqrt{D}=[a;\overline{1,1,1,2a}]$, then $a=3t-1$ and $D=t(9t-2)$ for some $t\in\N_{+}$.
\item If $\sqrt{D}=[a;\overline{1,1,1,1,2a}]$, then $D\in\N_+$ if and only if $a=5t-2$ for some $t\in\N_{+}$. Then $D=25 t^2-14 t+2$.
\item If $\sqrt{D}=[a;\overline{1,1,x,1,1,2a}]$, then $D\in\N_{+}$ if and only if $x=2u$ and $a=2(2u+1)v-u-1$ for some $u, v\in\N_{+}$. Then $D=(4v-1)((2u+1)^2v-u(u+1))$.
\item If $\sqrt{D}=[a;\overline{1,1,x,x,1,1,2a}]$, then $D\in\N_+$ if and only if $x\in\N_{+}$ with $a=(4x^2+4x+5)t-(2x+1)(x^2+x+1)(x^2+2x+2)$ for some integer $t>(2x+1)(x^2+x+1)(x^2+2x+2)/(4x^2+4x+5)$.
\item If $\sqrt{D}=[a;\overline{1,1,1,x,1,1,1,2a}]$, then $D\in\N_+$ if and only if $x\in\N_{+}$ and $a=3(3x+4)\frac{t}{d(x)}+\frac{1}{2}(x+1)(3x+8)$ for some $t\in\N$, where $d(x)=\gcd(2(6x+7),3(3x+4))$.
\end{enumerate}
\end{lem}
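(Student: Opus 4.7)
The plan is to treat the six items by a single device. If $\sqrt{D}=[a;\overline{a_{1},\ldots,a_{n-1},2a}]$, the $n$-th complete quotient of $\sqrt{D}$ equals $a+\sqrt{D}$ and is purely periodic, with expansion $[\overline{2a,a_{1},\ldots,a_{n-1}}]$. Denoting by $p_{k}/q_{k}$ the $k$-th convergent of the finite continued fraction $[2a;a_{1},\ldots,a_{n-1}]$, the identity $\alpha=[2a;a_{1},\ldots,a_{n-1},\alpha]$ with $\alpha=a+\sqrt{D}$ yields
\begin{equation*}
\alpha=\frac{p_{n-1}\alpha+p_{n-2}}{q_{n-1}\alpha+q_{n-2}}.
\end{equation*}
Substituting $\alpha=a+\sqrt{D}$ and separating rational and irrational parts gives the automatic relation $p_{n-1}=q_{n-2}+2aq_{n-1}$ together with the key formula
\begin{equation*}
D=a^{2}+\frac{p_{n-2}}{q_{n-1}}.
\end{equation*}
Hence $D\in\N_{+}$ if and only if $q_{n-1}\mid p_{n-2}$; the side condition $p_{n-2}/q_{n-1}<2a+1$ that ensures $\lfloor\sqrt{D}\rfloor=a$ becomes a mild inequality on the free parameters.

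For each item I would compute $p_{n-2}$ and $q_{n-1}$ explicitly as polynomials in $a$ (and in the free letters $x,u,v$) by the three-term recursion, and convert the divisibility into a linear congruence on $a$. In (1), $q_{2}=2$ and $p_{1}=2a+1$, so there is no solution. In (2), $q_{3}=3$ and $p_{2}=4a+1$ force $a=3t-1$ and $D=t(9t-2)$. In (3), $q_{4}=5$, $p_{3}=6a+2$ force $a=5t-2$ and $D=25t^{2}-14t+2$. In (4), $q_{5}=4(x+1)$ and $p_{4}=2(2x+3)a+(x+2)$; the mod-$2$ obstruction forces $x$ even, $x=2u$, and the congruence reduces to $a\equiv-(u+1)\pmod{2(2u+1)}$, giving $a=2(2u+1)v-u-1$ and, after substitution, the announced factorization of $D$. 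Item (5) is analogous. In (6) the recursion gives
\begin{equation*}
q_{7}=3(3x+4),\qquad p_{6}=2(6x+7)a+4(x+1),
\end{equation*}
so the condition is $3(3x+4)\mid 2(6x+7)a+4(x+1)$. A single Euclidean step gives $\gcd(2(6x+7),3(3x+4))=\gcd(3x+2,6)$; since $3\nmid 3x+2$ this equals $\gcd(3x+2,2)=d(x)\in\{1,2\}$ depending on the parity of $x$. The solution set for $a$ is therefore an arithmetic progression with common difference $3(3x+4)/d(x)$, and a convenient particular solution is $a_{0}=\tfrac{1}{2}(x+1)(3x+8)$, as confirmed by
\begin{equation*}
2(6x+7)a_{0}+4(x+1)=(x+1)\bigl((6x+7)(3x+8)+4\bigr)=3(x+1)(3x+4)(2x+5).
\end{equation*}
This produces exactly the parametrization of $a$ in the statement.

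The main obstacle is careful bookkeeping: one has to carry the three-term recursion without slips, noting that each $p_{k}$ is bilinear in $a$ and polynomial in the free letters. The only genuinely non-routine ingredient is the gcd calculation in (6), where the reduction $\gcd(12x+14,9x+12)=\gcd(3x+2,6)$ and the coprimality of $3x+2$ with $3$ isolate the parity of $x$ as the sole source of the gcd. For the converse, i.e.\ the ``if'' direction, one should confirm that the $D$ produced is a non-square positive integer with $\lfloor\sqrt{D}\rfloor=a$ and that the period of the continued fraction of $\sqrt{D}$ equals the prescribed length $n$; the first two follow immediately from the explicit expressions and the bound $p_{n-2}/q_{n-1}<2a+1$, while the third is excluded case-by-case by applying the same formula $D=a^{2}+p_{n-2}/q_{n-1}$ to each shorter palindromic pattern compatible with the structure and observing that it would produce a different value of $D$.
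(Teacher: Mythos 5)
Your proposal is correct and follows essentially the same route as the paper: the paper likewise reduces each case to the integrality of the explicit rational expression $[a;\overline{\ldots,2a}]^{2}-a^{2}$ (quoting Friesen's Fibonacci identity for items (1)--(3) and computing directly for (4)--(6)) and then solves the resulting linear congruence in $a$, which is exactly what your unified formula $D=a^{2}+p_{n-2}/q_{n-1}$ encodes. The only step you leave unexecuted is the verification $\gcd(2(2x^{2}+3x+3),\,4x^{2}+4x+5)=1$ needed to parametrize the solutions in item (5), which the paper supplies via an explicit B\'ezout identity; this is a routine Euclidean computation and does not affect the soundness of your plan.
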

\begin{proof}
The proofs of the statements $(1)-(3)$ are simple consequence of a well known identity
$$
[a;\overline{\underset{k-1\text{ times}}{1,\ldots,1},2a}]^2=\frac{1}{F_{k}}(F_{k}a^2+2F_{k-1}a+F_{k-2}),
$$
where in the bracket we have exactly $k-1$ occurrences of the digit 1.
In the formula above, $F_{n}$ denotes the $n$-th Fibonacci number defined as usual by $F_{0}=0, F_{1}=1$ and $F_{n}=F_{n-1}+F_{n-2}$ for $n\geq 2$.
The proof can be found for example in \cite{Frie} (with a suitable specialisation).

In order to get the fourth equivalence we easily compute that
$$
[a;\overline{1,1,x,1,1,2a}]=\sqrt{\frac{(2a+1)(2(x+1)a+x+2)}{4(x+1)}}.
$$
Writing $(2a+1)(2(x+1)a+x+2)=4(x+1)a(a+1)+2a+x+2$ we see that our continued fraction represents a square root of an integer if and only if $2a+x+2\equiv 0\pmod{4(x+1)}$. In consequence, $x=2u$ for some $u\in\N_{+}$. Then, after simplifications, we are left with the congruence $a+u+1\equiv 0\pmod{2(2u+1)}$ and hence $a=2(2u+1)v-u-1$ for some $v\in\N_{+}$.

We turn our attention to the fifth equivalence. A simple computation reveals that
$$
[a;\overline{1,1,x,x,1,1,2a}]=\sqrt{a^2+\frac{2(2x^2+3x+3)a+x^2+2x+2}{4x^2+4x+5}},
$$
and we need to consider the linear congruence
$$
2(2x^2+3x+3)a+x^2+2x+2\equiv 0\pmod{4x^2+4x+5}.
$$
Here, we treat $a$ as an unknown. From the identity
$$
2(2x^2+3x+3)u-(4x^2+4x+5)v=1,
$$
where $u=(2x+1)(x^2+x+1)$ and $v=2x^3+4x^2+4x+1$ we get that $\gcd(2(2x^2+3x+3), 4x^2+4x+5)=1$. In consequence, all the integer solutions of our congruence are parametrised by the formula $a=(4x^2+4x+5)t-(2x+1)(x^2+x+1)(x^2+2x+2)$, where $x\in\N_{+}$ and $t$ is an integer parameter leading to $a>0$, i.e., $t>(2x+1)(x^2+x+1)(x^2+2x+2)/(4x^2+4x+5)$.

Finally, we concentrate on the last statement. Direct calculation reveals the identity
$$
[a;\overline{1,1,1,x,1,1,1,2a}]=\sqrt{a^2+\frac{2(6x+7)a+4(x+1)}{3(3x+4)}}.
$$
We thus consider the congruence $2(6x+7)a+4(x+1)\equiv 0\pmod{3(3x+4)}$. The necessary and sufficient condition for its solvability is $d(x):=\gcd(2(6x+7),3(3x+4))\mid 4(x+1)$. Because $d(x)=1 (2)$ for $x\equiv 1\pmod{2}$ (for $x\equiv 0\pmod{2}$) the condition is clearly satisfied. Moreover, it is easy to find a particular solution $a=\frac{1}{2}(x+1)(3x+8)$. Thus, the general solution takes the form
$$
a=3(3x+4)\frac{t}{d(x)}+\frac{1}{2}(x+1)(3x+8)
$$
and we get the result.
\end{proof}

We note the following simple consequences of our result.

\begin{cor}
Let $p\in\mathbb{P}$ and suppose that $I_{p,2}=(1,1)$. Then $T_{p}=4$ with $p=7$ or $T_{p}\geq 5$.
\end{cor}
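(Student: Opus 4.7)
The plan is to stratify by the period length $T_p$ and eliminate the small cases $T_p \in \{1,2,3,4\}$ (or pin them down to $p = 7$), since the statement permits any $T_p \geq 5$. I will lean on two structural facts about the period of $\sqrt{p}$: it has the form $(a_1, \ldots, a_{T_p-1}, 2a_0)$ with last digit $2a_0 \geq 2$, and the prefix $(a_1, \ldots, a_{T_p-1})$ is palindromic. Combined with the hypothesis $I_{p,2} = (1,1)$ these rigidly constrain the possible periods, after which Lemma \ref{simplelem} does the rest.

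For $T_p \in \{1,2\}$ the digit $2a_0 \geq 2$ would be forced to sit in position $1$ or $2$ of the period, contradicting $I_{p,2} = (1,1)$. For $T_p = 3$, the palindrome relation gives $a_2 = a_1 = 1$, so the full period equals $(1,1,2a_0)$, which is exactly the shape ruled out by Lemma \ref{simplelem}(1). For $T_p = 4$, the palindrome relation gives $a_3 = a_1 = 1$, so the period is $(1,1,1,2a_0)$, and Lemma \ref{simplelem}(2) produces the parametrization $p = t(9t-2)$ for some $t \in \mathbb{N}_+$.

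The only place with a small amount of bite is the primality analysis of $p = t(9t-2)$. Since $9t - 2 \geq 7$ for every $t \geq 1$, the factorization is nontrivial unless $t = 1$, which indeed yields the unique prime $p = 7$ (consistent with $\sqrt{7} = [2;\overline{1,1,1,4}]$). I do not anticipate any real obstacle: all the substantive content is already packaged inside Lemma \ref{simplelem}, and the argument reduces to a short bookkeeping exercise over the four small period lengths plus a one-line divisibility check.
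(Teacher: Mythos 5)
Your proof is correct and follows exactly the route the paper intends: the corollary is stated there without proof as a ``simple consequence'' of Lemma \ref{simplelem}, and your case analysis over $T_p\in\{1,2,3,4\}$ (using the last digit $2a_0\ge 2$, the palindromy, parts (1)--(2) of the lemma, and the trivial factorization $p=t(9t-2)$ forcing $t=1$, $p=7$) is precisely that argument spelled out.
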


\begin{cor}\label{four1}
It the Hypothesis H is true, there are infinitely many prime numbers $p$, such that $\sqrt{p}=[a;\overline{1,1,1,1,2a}]$ for certain $a\in\N_{+}$.
\end{cor}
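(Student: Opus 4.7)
The plan is to apply Hypothesis~H directly to the one-variable polynomial produced by statement~(3) of Lemma~\ref{simplelem}. That lemma tells us that whenever $D = 25t^2 - 14t + 2$ for some $t \in \N_+$, the expression $[5t-2;\overline{1,1,1,1,2(5t-2)}]$ equals $\sqrt{D}$. So it is enough to exhibit infinitely many $t \in \N_+$ for which $f(t) := 25t^2 - 14t + 2$ is prime, and then confirm that for each such $t$ the listed expansion is indeed the canonical continued fraction expansion of $\sqrt{f(t)}$.

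To invoke Hypothesis~H for the single polynomial $f$, I would verify its three hypotheses in order. First, $f$ has positive leading coefficient $25$. Second, the discriminant is $14^2 - 4\cdot 25\cdot 2 = -4 < 0$, so $f$ has no real root and is therefore irreducible over $\Q$, hence over $\Z$. Third, no integer $n>1$ divides $f(t)$ for all integers $t$, since for example $f(1) = 13$ and $f(2) = 74$ are coprime. Hypothesis~H then yields infinitely many $t \in \N_+$ with $f(t) \in \mathbb{P}$.

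It remains to check that the continued fraction $[5t-2; \overline{1,1,1,1,2(5t-2)}]$ coming from Lemma~\ref{simplelem}(3) really is the continued fraction expansion of $\sqrt{f(t)}$, i.e.\ that $a := 5t-2 = \lfloor \sqrt{f(t)} \rfloor$. A short computation gives
\[
f(t) - a^2 = 6t - 2 > 0, \qquad (a+1)^2 - f(t) = 4t - 1 > 0
\]
for every $t \geq 1$, and primality of $f(t)$ ensures that $f(t)$ is not a perfect square, so $a = \lfloor \sqrt{f(t)} \rfloor$. Thus for each of the infinitely many primes $p = f(t)$ produced above, $\sqrt{p} = [a;\overline{1,1,1,1,2a}]$, which is the claim.

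There is no real obstacle here beyond the routine verification of the three conditions for Hypothesis~H; all the substantive content has been packaged into Lemma~\ref{simplelem}(3), which converts the continued-fraction condition into a polynomial Diophantine problem in one variable. The only point at which a mistake could sneak in is forgetting to certify that $a = \lfloor \sqrt{f(t)} \rfloor$, which is why I would include the two inequalities above explicitly.
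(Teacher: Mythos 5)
Your proof is correct and follows essentially the same route as the paper: both apply Hypothesis~H to the single polynomial $25t^2-14t+2$ supplied by Lemma~\ref{simplelem}(3), after checking irreducibility, positivity of the leading coefficient, and the absence of a fixed prime divisor (the paper uses $D(0)=2$ together with the odd value $D(1)=13$, you use $\gcd(f(1),f(2))=1$; these are equivalent checks). Your additional verification that $5t-2=\lfloor\sqrt{f(t)}\rfloor$ is a harmless extra confirmation that the paper leaves implicit.
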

\begin{proof}
The polynomial $D(t)=25 t^2-14 t+2$ attains odd values for odd integers $t$, has positive leading coefficient and is irreducible in $\Q[t]$. Moreover, $p\nmid D(0)=2$ for each odd prime number $p$. Thus, from Hypothesis H the polynomial $D$ represents infinitely many prime numbers.
\end{proof}

In the next theorem we construct explicit quadratic polynomial in two variables, such that if $p=D(t,d)$ is a prime for some $t, d\in\N_{+}$, then  $I_{p,3}=(1,1,1)$ and $I_{p,4}\neq (1,1,1,1)$. It is clear that the existence of infinitely many prime numbers $p$ satisfying the property $I_{p,3}=(1,1,1),  I_{p,4}\neq (1,1,1,1)$ follows from Vinogradov result. However, the fact that this set contains all primes represented by specific polynomial in two variables cannot be deduced.

\begin{thm}\label{three1}
Let $t,d\in\N_{+}$ and write $D(t,d)=(4t+3d+5)^2+5t+4d+6$. We have the following properties of the continued fraction expansion of $\sqrt{D(d,t)}$.
\begin{enumerate}
\item We have $I_{D(d,t),3}=(1,1,1)$.
\item The inequality $T_{D(d,t)}\geq 7$ is true. Moreover, $T_{D(d,t)}=7$ if and only if $(d,t)=(1,3)$ with $\sqrt{D(1,3)}=5\sqrt{17}$.
\item We have $T_{D(d,t)}=8$ if and only if
$$
d=(3x-4)u + 3 + x - x^2,\quad t=2(3u-x-2)\;\mbox{with}\; u>\frac{1}{3}(x+2),
$$
where $x, u\in\N_+$. Then $D(d,t)=(9u-3x-1)((3x+4)^2u-(x+1)(3x^2+6x+2))$.
\item We have $I_{D(d,t),4}=(1,1,1,1)$ if and only if $d\in\{1,2\}, t\in\N_{+}$. Then $D(1,t)=(t+2)(16t+37)$ and $D(2,t)=(t+3)(16t+45)$. In particular, in the set $\cal{D}=\{p\in\mathbb{P}:\;p=D(d,t)\;\mbox{for some}\;d,t\in\N_{+}\}$ there is no $p$ satisfying $I_{p,4}=(1,1,1,1)$.
\end{enumerate}
\end{thm}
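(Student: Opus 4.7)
The plan is to run the standard continued-fraction algorithm for $\sqrt{D}$ explicitly on $D=D(d,t)$, then combine the initial partial-quotient data with the Fibonacci-quotient identity used in Lemma \ref{simplelem} and the palindrome property of the period to handle all four parts.

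Set $a=4t+3d+5$ and $m=5t+4d+6$, so that $D=a^{2}+m$ with $0<m<2a+1$ and $a_{0}=a$. Writing $\alpha_{k}=(\sqrt{D}+P_{k})/Q_{k}$ and applying the recurrences $P_{k+1}=a_{k}Q_{k}-P_{k}$, $Q_{k+1}=(D-P_{k+1}^{2})/Q_{k}$, I will verify by direct computation (the needed divisibilities $Q_{k}\mid D-P_{k+1}^{2}$ drop out after the right factoring) that
\begin{align*}
(P_{1},Q_{1}) &= (a,\,m), & (P_{2},Q_{2}) &= (m-a,\,2a+1-m),\\
(P_{3},Q_{3}) &= (2t+d+4,\,4t+4d+3), & (P_{4},Q_{4}) &= (2t+3d-1,\,3t+10),
\end{align*}
with $a_{1}=\lfloor 2a/m\rfloor=1$, $a_{2}=\lfloor m/(2a+1-m)\rfloor=1$ and $a_{3}=\lfloor(6t+4d+9)/(4t+4d+3)\rfloor=1$ for all $d,t\geq 1$. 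This settles (1). For (4) I will note that $a_{4}=\lfloor(6t+6d+4)/(3t+10)\rfloor$ equals $1$ iff $6t+6d+4<2(3t+10)$, i.e., iff $d\leq 2$; the factorizations $D(1,t)=(t+2)(16t+37)$ and $D(2,t)=(t+3)(16t+45)$ then prevent any prime in $\mathcal{D}$ from having $I_{p,4}=(1,1,1,1)$.

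For (2), I will exploit the palindromicity of $(a_{1},\ldots,a_{T_{D}-1})$: combined with $a_{1}=a_{2}=a_{3}=1$ it forces, for each $T_{D}\in\{4,5,6,7\}$, the entire period to consist of $1$'s followed by $2a$. By the Fibonacci identity recalled at the start of the proof of Lemma \ref{simplelem}, such a $D$ must satisfy $F_{k}D=F_{k}a^{2}+2F_{k-1}a+F_{k-2}$ with $k=T_{D}$. Substituting $a=4t+3d+5$ and $D=a^{2}+m$ collapses each of the four cases to a linear Diophantine condition on $(d,t)$: $k=4$ forces $t=-3$; $k=5$ forces $t+2d=2$; $k=6$ requires $8\mid 2a+3$, impossible since $2a+3$ is odd; $k=7$ reduces to $t+4d=7$, whose only positive solution is $(d,t)=(1,3)$, yielding $D(1,3)=425=5^{2}\cdot 17$.

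For (3), the palindrome forces the period, when $T_{D}=8$, to have the shape $(1,1,1,x,1,1,1,2a)$ with $x=a_{4}\in\N_{+}$. I will then apply Lemma \ref{simplelem}(6), which supplies the closed form $m=(2(6x+7)a+4(x+1))/(3(3x+4))$. Matching this with $m=5t+4d+6$ after inserting $a=4t+3d+5$ reduces to the single linear equation $6d=(3x-4)t+10x+2$; substituting $t=2(3u-x-2)$ makes the right-hand side divisible by $6$ and yields $d=(3x-4)u+3+x-x^{2}$, while $t\geq 1$ translates to $u>(x+2)/3$. The stated factorization of $D(d,t)$ will then be a direct algebraic verification.

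The main obstacle is the bookkeeping in parts (2) and (3): for each short-period case the Fibonacci-based identity for $\sqrt{D}=[a;\overline{1,\ldots,1,2a}]$ has to be matched against $D=a^{2}+m$ carefully, and for $T_{D}=8$ one must convert the Lemma \ref{simplelem}(6) parameterization into the $(x,u)$ form of the theorem while tracking the positivity conditions on $d$ and $t$. All remaining steps are routine substitutions into the continued-fraction recursion.
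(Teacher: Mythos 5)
Your proposal reproduces the paper's proof in all essentials. For parts (1) and (4) you run the same computation, merely phrasing it through the $(P_k,Q_k)$ recursion and the floor formula $a_k=\lfloor(a_0+P_k)/Q_k\rfloor$ where the paper bounds $1<\alpha_k<2$ directly; your $Q_1=m$, $Q_2=3t+2d+5$, $Q_3=4t+4d+3$, $Q_4=3t+10$ are exactly the paper's denominators, and your condition $6t+6d+4<2(3t+10)$ is equivalent to the paper's $41-16d>0$. For part (2) you use the general Fibonacci identity where the paper writes out its specializations for $k=4,5,6,7$, and you land on the same linear conditions ($t=-3$, $t+2d=2$, the parity obstruction for $k=6$, and $t+4d=7$), so this part is fine.

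The one substantive issue is in part (3), and it is a gap you inherit from the paper rather than introduce: after reducing $T_{D(d,t)}=8$ to the equation $6d=(3x-4)t+10x+2$, you substitute $t=2(3u-x-2)$ and exhibit a parametric family of solutions, but the ``only if'' direction of the statement needs this family to exhaust \emph{all} positive solutions. For odd $x$ it does, since $\gcd(6,3x-4)=1$ and the solutions in $t$ form a single residue class modulo $6$. For even $x$, however, $\gcd(6,3x-4)=2$, the solutions in $t$ form a residue class modulo $3$ containing both parities, and $t=2(3u-x-2)$ picks out only the even half. Concretely, $(x,d,t)=(2,4,1)$ satisfies $6\cdot 4=2\cdot 1+22$, and indeed $D(4,1)=21^2+27=468$ has $\sqrt{468}=[21;\overline{1,1,1,2,1,1,1,42}]$, period $8$, with $t=1$ odd and hence outside the claimed parametrization. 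So if you intend to prove the equivalence as stated, the step ``the general solution is $t=2(3u-x-2)$, $d=(3x-4)u+3+x-x^2$'' fails for even $x$; at minimum you should separate the two parities of $x$ and note that only the ``if'' direction (together with the factorization of $D$ on that family) survives in the form given.
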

\begin{proof}
Let us write $\sqrt{D(d,t)}=[a_{0};\overline{a_{1},\ldots, a_{k-1},2a_{0}}]$. Because $(4t+3d+5)^2<D(t,d)<(4t+3d+5+1)^2$ we get
the equality $a_{0}=[\sqrt{D(t,d)}]=4t+3d+5$. Now
$$
\alpha_{1}=\frac{1}{\sqrt{D(d,t)}-(4t+3d+5)}=\frac{\sqrt{D(d,t)}+4t+3d+5}{5t+4d+6}.
$$
It is clear that $\alpha_{1}>1$ and observe that
$$
2-\alpha_{1}=\frac{6t+5d+7-\sqrt{D(d,t)}}{5t+4d+6}>\frac{6t+5d+7-(a_{0}+1)}{5t+4d+6}=\frac{2t+2d+1}{5t+4d+6}>0.
$$
This implies $\alpha_{1}<2$ and in consequence $a_{1}=[\alpha_{1}]=1$.

Next
$$
\alpha_{2}=\frac{1}{\alpha_{1}-1}=\frac{t+d+1+\sqrt{D(d,t)}}{3t+2d+5}
$$
and
$$
2-\alpha_{2}=\frac{5t+3d+9-\sqrt{D(d,t)}}{3t+2d+5}>\frac{5t+3d+9-(a_{0}+1)}{3t+2d+5}=\frac{3+t}{3t+2d+5}>0.
$$
In consequence $1<\alpha_{2}<2$ and $a_{2}=[\alpha_{2}]=1$.

Finally, we observe that
$$
\alpha_{3}=\frac{1}{\alpha_{2}-1}=\frac{2t+d+4+\sqrt{D(d,t)}}{4t+4d+3}
$$
and thus
$$
2-\alpha_{3}=\frac{6t+7d+2+\sqrt{D(d,t)}}{4t+4d+3}>\frac{6t+7d+2-(a_{0}+1)}{4t+4d+3}=\frac{2(t+2d-2)}{4t+4d+3}>0.
$$
In consequence $a_{3}=[\alpha_{3}]=1$ and the first part of our theorem is proved.

In order to prove the second part of our theorem we show that there is no $(d,t)\in\N_{+}\times\N_{+}$ satisfying $T_{D(d,t)}\in\{4,5,6\}$.

If $T_{D(d,t)}=4$ then from identity $[a;\overline{1,1,1,2a}]=\sqrt{3(a+1)(3a+1)}/3$ we get $a=3b-1$ for some $b\in\N_{+}$. Thus we get the equalities
$$
3b-1=4t+3d+5,\quad b(9b-2)=D(d,t)
$$
and in consequence $b=(4t+3d+6)/3$. With this $b$ we observe that $b(9b-2)-D(d,t)=-(t+3)/3$ - a contradiction.

If $T_{D(d,t)}=5$ then from the identity $[a;\overline{1,1,1,1,2a}]=\sqrt{5(5a^2+6a+2)}/5$ we get $a=5b-2$ for some $b\in\N_{+}$. In consequence we have
$$
5b-2=4t+3d+5,\quad 25 b^2-14 b+2=D(d,t)
$$
and thus $b=(4t+3d+7)/5$. However, with $b$ chosen in this way we observe that $25b^2-14b+2-D(d,t)=(t+2d-2)/5$ - a contradiction.

If $T_{D(d,t)}=6$ then from the identity $[a;\overline{1,1,1,1,1,2a}]=\sqrt{2(8a^2+10a+3)}/4$ we get a contradiction with integrality of $a$.

Tying all the obtained observations together we get the required inequality $T_{D(d,t)}\geq 7$.

If $T_{D(d,t)}=7$ then necessarily $\sqrt{D(d,t)}=[a;\overline{1,1,1,1,1,1,2a}]$ with $a=4t+3d+5$ and some $d, t\in\N_{+}$. The identity  $[a;\overline{1,1,1,1,1,1,2a}]=\sqrt{13(13 a^2+16 a+5)}/13$ implies $4t+3d+5=13b-6$ for some $b\in\N_{+}$. In consequence we need to solve the system of the Diophantine equations
$$
13b-6=4t+3d+5,\quad 169 b^2-140 b+29=D(d,t).
$$
We have $b=(4t+3d+11)/13$ and with such $b$ we obtain the equation
$$
169 b^2-140 b+29-D(d,t)=(t+4d-7)/13=0.
$$
The only solution in positive integers of the equation $t+4d-7=0$ is $(d,t)=(1,3)$ and we get the result.

Next, if $T_{D(d,t)}=8$ then necessarily $\sqrt{D(d,t)}=[a;\overline{1,1,1,x,1,1,1,2a}]$ with $a=4t+3d+5$ and some $d, t\in\N_{+}$. The identity
$$
[a;\overline{1,1,1,x,1,1,1,2}]=\sqrt{\frac{(9d+12t+17)(3(3x+4)d+4(3x+4)t+17x+22)}{3(3x+4)}}.
$$
implies that we need to solve the Diophantine equation.
$$
(9d+12t+17)(3(3x+4)d+4(3x+4)t+17x+22)=3(3x+4)D(d,t)
$$
or equivalently
$$
\frac{6d-(3x-4)t-2(5x+1)}{3(3x+4)}=0.
$$
The above equation has a parametric solution
$$
d=(3x-4)u + 3 + x - x^2,\quad t=2(3u-x-2),\quad u\in\Z.
$$
With this choice of $d, t$ we get $D(d,t)=(9u-3x-1)((3x+4)^2u-(x+1)(3x^2+6x+2))$ and observe the equivalence
$$
d,t\in\N_{+}\wedge D(d,t)>0 \quad \Longleftrightarrow u>\frac{1}{3}(x+2).
$$
\bigskip

Finally, in order to prove the last statement we assume that in the continued fraction expansion of $\sqrt{D(d,t)}$ we have $a_{4}=1$. Based on our computations of $\alpha_{i}, i\in\{0,1,2,3\}$, we have
$$
\alpha_{4}=\frac{1}{\alpha_{3}-1}=\frac{2t+3d-1+\sqrt{D(d,t)}}{3t+10},
$$
and we are interested in positive integer solutions of the inequality $2-\alpha_{4}>0$. Simple computations reveals that
\begin{align*}
2-\alpha_{4}&=\frac{4t-3d+21-\sqrt{D(d,t)}}{3t+10}>0\quad \Longleftrightarrow \quad 4t-3d+21> \sqrt{D(d,t)}\\
            &\Longleftrightarrow 4t-3d+21 >0 \wedge (4t-3d+21)^2>D(d,t)\\
            &\Longleftrightarrow 4t-3d+21 >0 \wedge (41-16d)(3t+10)>0\\
            &\Longleftrightarrow d\in\{1,2\} \wedge t\in\N_{+}
\end{align*}
and hence the result.
\end{proof}

\begin{rem}
{\rm Let us note that the expression for $D$ is reducible (as a polynomial in $\Z[d,t]$) if and only if the discriminant of $D$ with respect to $d$ or $t$ is a square. We have
$$
\op{Disc}_{d}(D)=4 (10 + 3t),\quad \op{Disc}_{t}(D)=41-16d.
$$
In consequence, $\op{Disc}_{d}(D)=\square$ if and only if $t=(u^2-10)/3$ for some $u\equiv 1,2\pmod{3}$ and $|u|\geq 4$. In this case we have
$$
D(d,t)=\frac{1}{9}(9d+4u^2-u-23)(9d+4u^2+u-23).
$$
Moreover, we have $\op{Disc}_{t}(D)=\square$ for $d\in\N_{+}$ if and only if $d=1$ or $d=2$. Then $D(1,t)=(t+2)(16t+37), D(2,t)=(t+3)(16 t+45)$.
}
\end{rem}

Let us recall (the simple form of) the result of Iwaniec concerning the existence of prime values taken by inhomogeneous quadratic forms \cite{Iwa}.

\begin{thm}[Theorem 1 in \cite{Iwa}]\label{iwa}
Let
$$
P(x,y)=ax^2+bxy+cy^2+ex+fy+g\in\Z[x,y],
$$
$\op{deg}P=2, (a,b,c,e,f,g)=1$, $P(x,y)$ be irreducible in $\Q[x,y]$, represents arbitrarily large odd numbers and depend essentially on two variables. Then the set of prime numbers represented by the polynomial $P(x,y)$ is infinite.
\end{thm}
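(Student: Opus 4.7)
The result is Iwaniec's celebrated theorem on primes represented by inhomogeneous binary quadratic forms, and its proof is by a sieve argument; I can only sketch the strategy. The plan is to apply the Rosser--Iwaniec linear sieve (of sieve dimension one) to the finite multiset
$$
\cal{A}_{N} = \{P(x,y) : (x,y) \in [1,N]^{2}\cap \Z^{2}\},
$$
and to extract a lower bound
$$
\#\{(x,y) \in [1,N]^{2}\cap\Z^{2} : P(x,y) \in \mathbb{P}\} \gg \frac{N^{2}}{\log^{2} N},
$$
which is unbounded in $N$ and therefore yields the desired infinitude.

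As a first step I would analyse the local densities. For each prime $q$, put
$$
\rho(q) = \#\{(x,y) \in (\Z/q\Z)^{2} : P(x,y) \equiv 0 \pmod{q}\}.
$$
Because $P$ is irreducible in $\Q[x,y]$ and genuinely depends on both variables, a standard count gives $\rho(q) = q + O(1)$, so the sieve weights $\omega(q) := \rho(q)/q$ satisfy $\omega(q) = 1 + O(1/q)$ and place the problem in the linear sieve regime. The two further hypotheses (coprimality of the coefficients and representation of arbitrarily large odd values) are precisely what is needed to ensure $\omega(q) < q$ for every $q$, and in particular that the parity obstruction at $q=2$ does not kill the sieve.

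The central analytic input is a level-of-distribution estimate: one needs a fixed $\theta > 0$ such that
$$
\sum_{q \leq N^{\theta}} \Big| \#\{(x,y) \in [1,N]^{2}\cap\Z^{2} : q \mid P(x,y)\} - \frac{\rho(q)}{q^{2}} N^{2} \Big| \ll \frac{N^{2}}{\log^{A} N}
$$
for every $A > 0$. This is where Iwaniec's technical contribution lies: via Poisson summation in both $x$ and $y$ the problem reduces to bounding complete exponential and character sums attached to the quadratic $P$, and these are controlled by Weil's bounds together with a bilinear-forms argument that uses both variables. The essential-two-variable hypothesis is indispensable here, because it is the averaging over the second variable that produces the power saving needed to clear the error terms.

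Once the level of distribution is in hand, feeding it together with the density information into the Rosser--Iwaniec linear sieve produces a lower bound of the expected order $N^{2}/\log^{2}N$ for the number of prime values of $P$ in the box, completing the proof. The main obstacle, by a wide margin, is establishing the level-of-distribution estimate; the sieve-theoretic machinery and the local density analysis are, by comparison, routine.
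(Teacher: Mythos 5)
First, a point of order: the paper does not prove this statement at all --- it is imported verbatim as Theorem 1 of \cite{Iwa} and used as a black box --- so there is no internal proof to compare your sketch against. Judged on its own terms, your outline has a fatal gap at precisely the step you call ``routine'': a sieve of dimension one cannot, by itself, produce primes, no matter how strong the level-of-distribution estimate is. The elements of $\cal{A}_N$ have size $\asymp N^2$, so certifying primality requires sifting by all primes up to $z\approx N$; the Rosser--Iwaniec lower bound is positive only for $s=\log Q/\log z>2$, i.e.\ only when the level of distribution $Q$ exceeds $z^2\approx N^2$, which is the cardinality of $\cal{A}_N$ itself and is therefore unattainable (even a best-possible level lands exactly at $s=2$, where the lower-bound function vanishes). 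This is the parity barrier: the axioms you list --- local densities $\omega(q)=1+O(1/q)$ plus equidistribution of $\cal{A}_N$ in residue classes to a power level --- are satisfied equally well by sets containing no primes at all, so no amount of work on the exponential-sum estimate can close the argument. What your strategy yields, at best, is that $P$ represents infinitely many almost-primes (which is the content of the companion paper \cite{Iwa2}, not of the theorem quoted here).

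Iwaniec's actual proof is algebraic--analytic rather than purely sieve-theoretic. Setting $D=b^2-4ac$ and completing the square, the equation $P(x,y)=p$ is converted into the representation of an explicit linear function of $p$ by a binary quadratic form of discriminant $D$, subject to congruence conditions on the variables; the degenerate cases ($D$ a square, or the quadratic part reducible) collapse to Dirichlet's theorem on primes in arithmetic progressions, and the nondegenerate case is handled through the multiplicative theory of binary quadratic forms (composition, genus and class-group characters) together with the prime number theorem for the associated $L$-functions. It is this multiplicative structure, not a level-of-distribution input fed into a linear sieve, that injects the primality information and circumvents parity. Relatedly, the hypothesis of essential dependence on two variables is used to guarantee that this reduction is nondegenerate, not to ``produce a power saving by averaging over the second variable,'' and the conditions $(a,b,c,e,f,g)=1$ and representation of arbitrarily large odd values are local solubility conditions rather than sieve-weight conditions. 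A genuinely sieve-theoretic route to primes from a two-variable polynomial exists only much later (the Friedlander--Iwaniec asymptotic sieve for $x^2+y^4$) and requires bilinear Type~II information far beyond the axioms you state.
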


\begin{rem}
{\rm In fact, Iwaniec proved not only the infinitude of the set of prime numbers represented by the polynomial $P$ but also obtained the lower bound
$$
N\log^{-3/2}N\ll\sum_{p=P(x,y)\leq N}1.
$$}
\end{rem}

Using Theorem \ref{three1} together with Iwaniec result we get the following.

\begin{cor}
There are infinitely many prime numbers $p$ such that beginning of the periodic part of $\sqrt{p}$ contains three consecutive 1's. Moreover, for given $N$, there are at least $N\log^{-3/2}N$ prime numbers $p<N$ such that $I_{p,3}=(1,1,1)$ and $I_{p,4}\neq (1,1,1,1)$.
\end{cor}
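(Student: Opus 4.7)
The plan is to combine Theorem \ref{three1} with Iwaniec's theorem (Theorem \ref{iwa}) together with the lower bound recalled in the Remark that follows it. If the polynomial $D(d,t)=(4t+3d+5)^{2}+5t+4d+6$ satisfies the hypotheses of Theorem \ref{iwa}, then we immediately obtain $\gg N(\log N)^{-3/2}$ primes $p\le N$ of the form $p=D(d,t)$ with $d,t\in\N_{+}$. Theorem \ref{three1}(1) then gives $I_{p,3}=(1,1,1)$ for each such prime, and the condition $I_{p,4}\neq(1,1,1,1)$ will follow from Theorem \ref{three1}(4) together with the reducibility of $D(1,t)$ and $D(2,t)$.

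The first step is to verify the hypotheses of Theorem \ref{iwa}. Expanding,
$$D(d,t)=16t^{2}+24dt+9d^{2}+45t+34d+31,$$
which is of degree two with coefficient tuple of gcd one (since $31$ is prime and $31\nmid 16$). It takes arbitrarily large odd values: $D(d,t)$ is odd whenever $d+t$ is even, and in that sub-lattice of $\N_{+}^{2}$ it tends to infinity. Irreducibility of $D$ in $\Q[d,t]$ is the content of the Remark after Theorem \ref{three1}, because the discriminants with respect to $d$ and $t$ are the linear polynomials $4(3t+10)$ and $41-16d$, neither of which can be a square in the respective univariate ring. The dependence on two variables becomes transparent after the unimodular change of coordinates $u=4t+3d+5$, $v=5t+4d$, under which $D=u^{2}+v+6$ is manifestly not a polynomial in any single linear form.

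Granting these verifications, Iwaniec's bound produces the required $\gg N(\log N)^{-3/2}$ primes $p\le N$ represented by $D$, each satisfying $I_{p,3}=(1,1,1)$ by Theorem \ref{three1}(1). To obtain the final inequality, I would assume $p=D(d,t)$ is such a prime with $I_{p,4}=(1,1,1,1)$; by Theorem \ref{three1}(4) this would force $d\in\{1,2\}$, but $D(1,t)=(t+2)(16t+37)$ and $D(2,t)=(t+3)(16t+45)$ are products of two factors each at least three for every $t\in\N_{+}$, hence composite. Consequently $d\ge 3$ for every prime in our set, and $I_{p,4}\neq(1,1,1,1)$ follows. The point I would double-check most carefully is the essentially-two-variables clause of Theorem \ref{iwa}, since the quadratic part $(3d+4t)^{2}$ of $D$ is a perfect square and the underlying binary form is degenerate; the unimodular substitution above is precisely what one needs in order to confirm that $D$ is nevertheless a genuine bivariate polynomial of degree two and thus falls under the scope of Theorem \ref{iwa} as quoted.
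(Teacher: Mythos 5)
Your proof is correct and follows essentially the same route as the paper: verify that $D(d,t)$ meets the hypotheses of Theorem \ref{iwa} (irreducibility via the discriminant remark, odd values, essential dependence on two variables), invoke Iwaniec's quantitative bound, and then use parts (1) and (4) of Theorem \ref{three1} together with the factorizations $D(1,t)=(t+2)(16t+37)$, $D(2,t)=(t+3)(16t+45)$ to exclude $I_{p,4}=(1,1,1,1)$. Your verification of the hypotheses is in fact more detailed than the paper's, which simply cites the non-constant discriminant $\op{Disc}_d D=4(3t+10)$ for the two-variable condition and the specialization $D(d,1)=9d^2+58d+92$ for odd values.
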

\begin{proof}
The inhomogeneous quadratic form $D(d,t)$ is: irreducible, represents sufficiently large odd integers (due to the identity $D(d,1)=9d^2+58d+92$) and depends essentially on two variables (due to the fact that the discriminant $\op{Disc}_{d}D(d,t)=4(3t+10)$ is non-constant). From Theorem \ref{iwa} we get the result.
\end{proof}

We proved that there are infinitely many prime numbers $p$ such that $I_{p,3}=(1,1,1)$. However, we are unable to prove anything (unconditionally) about the possible values of the period for $\sqrt{D(d,t)}$, where the value $D(d,t)$ is a prime number. From Lemma \ref{simplelem} we see that under Hypothesis H, we can have the smallest possible period $T_{p}=5$. A question arises whether we can prove the existence of $m$ such that there are infinitely many $p$'s with $T_{p}=m$ and $I_{p,3}=(1,1,1)$ without assumption of Hypothesis H? In order to do that we can try to use our parametric family of $D$'s.
From Lemma \ref{simplelem} and  Theorem \ref{three1} we need to have $m\geq 8$. However, if $T_{D(d,t)}=8$ then Theorem \ref{three1} implies $D=(9u-3x-1)((3x+4)^2u-(x+1)(3x^2+6x+2)$ with $u>\frac{1}{3}(x+2)$. Then, $D$ is a composite integer, as $9u-3x-1>5$ and $(3x+4)^2u-(x+1)(3x^2+6x+2)>3$.

Let us try $m=9$ and observe that $T_{D(d,t)}=9$ if and only if
$$
\sqrt{D(d,t)}=[4t+3d+5;\overline{1,1,1,x,x,1,1,1,2(4t+3d+5)}]
$$
for certain $x\in\N_{+}$. Standard computation reveals that
$$
\sqrt{D(d,t)}=\sqrt{\frac{Q(d,t)}{9x^2+12x+13}},
$$
where
$$
Q(d,t)=(9x^2+12x+13)(4t+3d)^2+2(4t+3d)(51x^2+67x+73)+289x^2+374x+410.
$$
In order to find required solutions we consider the equation $(9x^2+12x+13)D(d,t)=Q(d,t)$ in positive integers $x,d,t$. We have
$$
(9x^2+12x+13)D(d,t)-Q(d,t)=0 \Longleftrightarrow 2(3x+2)d-(3x^2-4x-1)t=10x^2+2x+7.
$$
The above Diophantine equation has solution in integers if and only if $x=2n$ for some $n\in\N_{+}$. Indeed, if $x\equiv 1\pmod{2}$ then $2|\gcd(2(3x+2),3x^2-4x-1)$ and $2\not | 10x^2+2x+7$. On the other hand, if $x=2n$ then it is easy to check the equality $\gcd(2(3x+2),3x^2-4x-1)=1$. Moreover, the pair
$$
d_{0}=4n^3(592n-457),\quad t_{0}=2368n^3+540n^2-52n+7
$$
is a solution of our equation. As a consequence we obtain full description of integer solutions in the following form
$$
d=(12 n^2-8 n-1)u+d_{0},\quad t=4(3n+1)u+t_{0},\quad u\in\N_{+}.
$$
With $d, t$ obtained in this way we get
$$
D(d,t)=P_{0}(n)u^2+P_{1}(n)u+P_{2}(n)=:F(n,u),
$$
where
\begin{align*}
P_{0}(n)&=\left(36n^2+24n+13\right)^2,\\
P_{1}(n)&=2\left(255744n^6+314064n^5+265824n^4+96196n^3+24300n^2-1898n+437\right),\\
P_{2}(n)&=2(25233408n^8+28330752n^7+23296712n^6+7136448n^5+\\
        &\hskip 2 cm 1742464n^4-315412n^3+94262n^2-6994n+565).
\end{align*}
One can easily check that for all $n\in\N_{+}$ we have $\gcd(P_{0}(n),P_{1}(n),P_{2}(n))=1$ and the polynomial $F$ is irreducible over $\Q[n,u]$. Note that $\op{Disc}_{u}(F(n,u))=-4$. A question arises whether the polynomial $F$ (in two variables $n, u$) represents infinitely many prime numbers. It is very likely that the hypothetical proof of this fact should be easier then the proof of any instances of Hypothesis H. This would imply the existence of infinitely many prime numbers $p$ with $T_{p}=9$ and $I_{p,3}=(1,1,1)$.

\section{Generalization of Cassini identity and some applications of Hypothesis H}\label{section3}

The following section is devoted to conditional results on frequency of appearing of number $1$ in the period of continued fraction of square root of prime numbers. In order to state the fundamental result of this section we need to introduce some notation. For given $n\in\N_{+}$ and $i, j\in\N_{+}$ satisfying the condition $i\leq j\leq n$ we will write
\begin{align*}
X_{n}&=(x_{1},\ldots,x_{n}),\\
X_{i,j}&=(x_{i}, x_{i+1}, \ldots, x_{n-1}, x_{n}, x_{n-1}, \ldots, x_{j+1}, x_{j}),\\
Y_{i,j}&=(x_{i}, x_{i+1}, \ldots, x_{n-1}, x_{n}, x_{n}, x_{n-1}, \ldots, x_{j+1}, x_{j}).
\end{align*}
In order to prove the main theoretical result of this section we will need the following.

\begin{thm}\label{Cassinilike}
For each $n\in\N_+$ we have
\begin{align*}
q_{2n-2}(X_{1,2})^2-q_{2n-1}(X_{1,1})q_{2n-3}(X_{2,2})=&1,\\
q_{2n-1}(Y_{1,2})^2-q_{2n}(Y_{1,1})q_{2n-2}(Y_{2,2})=&-1.
\end{align*}
\end{thm}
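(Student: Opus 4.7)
The plan is to derive both equalities as instances of the classical Euler identity for continuants,
\begin{equation*}
q_m(a_1,\ldots,a_m)\,q_{m-2}(a_2,\ldots,a_{m-1})-q_{m-1}(a_1,\ldots,a_{m-1})\,q_{m-1}(a_2,\ldots,a_m)=(-1)^m,
\end{equation*}
together with the reversal symmetry $q_m(a_1,\ldots,a_m)=q_m(a_m,\ldots,a_1)$. Both are standard and follow by short inductions on $m$ from the recurrence $q_k=a_kq_{k-1}+q_{k-2}$; indeed the Fibonacci specialisation quoted at the start of Lemma \ref{simplelem} (all $a_i=1$) is already a particular case.

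For the first equality I would apply the Euler identity to the word $X_{1,1}$, which has length $m=2n-1$. Deleting its last letter produces the prefix $(x_1,\ldots,x_n,x_{n-1},\ldots,x_2)=X_{1,2}$; deleting its first letter produces $(x_2,\ldots,x_n,x_{n-1},\ldots,x_1)$, which is the reverse of $X_{1,2}$ and hence has the same continuant by the reversal symmetry; and deleting both ends leaves $(x_2,\ldots,x_n,x_{n-1},\ldots,x_2)=X_{2,2}$. Plugging these three words into the Euler identity with $m=2n-1$, so that the right-hand side equals $-1$, and rearranging gives exactly $q_{2n-2}(X_{1,2})^2-q_{2n-1}(X_{1,1})\,q_{2n-3}(X_{2,2})=1$.

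The second equality follows from the same argument applied to $Y_{1,1}$, now of length $m=2n$. The two one-letter truncations of $Y_{1,1}$ are once again reverses of one another and each coincides, up to reversal, with $Y_{1,2}$, while deleting both ends yields $Y_{2,2}$. The parity of $m$ has flipped, so the right-hand side of the Euler identity is now $+1$, and rearranging delivers the required $-1$.

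The main, and essentially only, obstacle I foresee is the combinatorial bookkeeping: one must carefully check that the three subwords of $X_{1,1}$ obtained by dropping the first letter, the last letter, or both really are $X_{1,2}$, its reverse, and $X_{2,2}$, and similarly for $Y$. This is immediate on unpacking the definitions of $X_{i,j}$ and $Y_{i,j}$ but benefits from writing the indices out explicitly. No induction on $n$ or case analysis is needed beyond the single invocation of Euler's identity.
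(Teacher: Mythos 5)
Your proof is correct, but it takes a genuinely different route from the paper's. The paper proves the theorem by a direct induction on $n$: it peels $x_1$ off both ends of the palindromic word simultaneously using the recurrences $q_k(X_k)=x_1q_{k-1}(x_2,\ldots,x_k)+q_{k-2}(x_3,\ldots,x_k)$ and $q_k(x_1,\ldots,x_k)=x_kq_{k-1}(x_1,\ldots,x_{k-1})+q_{k-2}(x_1,\ldots,x_{k-2})$, expands, cancels, and lands on the same statement for the shorter palindrome $X_{2,2}$ (resp. $Y_{2,2}$). You instead specialise the general continuant determinant identity
\begin{equation*}
q_m(a_1,\ldots,a_m)\,q_{m-2}(a_2,\ldots,a_{m-1})-q_{m-1}(a_1,\ldots,a_{m-1})\,q_{m-1}(a_2,\ldots,a_m)=(-1)^m
\end{equation*}
to the palindromic words $X_{1,1}$ (length $2n-1$) and $Y_{1,1}$ (length $2n$), and observe that palindromicity forces the two one-letter truncations to be reverses of each other, so their continuants coincide by the reversal symmetry $q_k(a_1,\ldots,a_k)=q_k(a_k,\ldots,a_1)$ and the cross term becomes a square. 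Your bookkeeping of the subwords and of the parity of $m$ checks out, including the degenerate case $n=1$ where $q_{-1}=0$ makes everything consistent. What your approach buys is conceptual clarity: it exhibits the two identities as a single determinant identity (the matrix $\prod M(a_i)$ with $\det M(a)=-1$) evaluated on palindromes, with no bespoke induction; the cost is that you must quote or prove the Euler identity and the reversal symmetry as lemmas, whereas the paper's self-contained induction uses only the defining recurrences (though it too invokes reversal symmetry). Either way the argument is complete.
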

\begin{proof}
Because the proofs of the identities are analogous, we only present the proof of the first one. We proceed by induction on $n\in\N_+$. For $n=1$ we have $q_{-1}=0$, $q_0=1$, $q_1(x_1)=x_1$. Thus $q_0^2-q_1(x_1)q_{-1}=1$.

Assume now that $n>1$. Then, we use the identities of the form $q_k(x_1,...,x_k)=q_k(x_k,...,x_1)$, $q_k(X_{k})=x_1q_{k-1}(x_2,...,x_k)+q_{k-2}(x_3,...x_k)$ and $q_k(x_1,...,x_k)=x_kq_{k-1}(x_1,...,x_{k-1})+q_{k-2}(x_1,...x_{k-2})$, $k\in\N_+$, to obtain the following chain of equalities:
\begin{align*}
&q_{2n-2}(X_{1,2})^2-q_{2n-1}(X_{1,1})q_{2n-3}(X_{2,2})\\
&=(x_1q_{2n-3}(X_{2,2})+q_{2n-4}(X_{3,2}))^2-(x_1q_{2n-2}(X_{1,2})+q_{2n-3}(X_{1,3}))q_{2n-3}(X_{2,2})\\
& =(x_1q_{2n-3}(X_{2,2})+q_{2n-4}(X_{2,3}))^2-[x_1(x_1q_{2n-3}(X_{2,2})+q_{2n-4}(X_{3,2}))\\
&\quad +x_1q_{2n-4}(X_{2,3})+q_{2n-5}(X_{3,3})]q_{2n-3}(X_{2,2})\\
& =x_1^2q_{2n-3}(X_{2,2})^2+2x_1q_{2n-3}(X_{2,2})q_{2n-4}(X_{2,3})+q_{2n-4}(X_{2,3})^2-[x_1^2q_{2n-3}(X_{2,2})\\
& \quad +2x_1q_{2n-4}(X_{2,3})+q_{2n-5}(X_{3,3})]q_{2n-3}(X_{2,2})
\end{align*}
\begin{align*}
& =x_1^2q_{2n-3}(X_{2,2})^2+2x_1q_{2n-3}(X_{2,2})q_{2n-4}(X_{2,3})+q_{2n-4}(X_{2,3})^2-x_1^2q_{2n-3}(X_{2,2})^2\\
& \quad -2x_1q_{2n-4}(X_{2,3})q_{2n-3}(X_{2,2})-q_{2n-5}(X_{3,3})q_{2n-3}(X_{2,2})\\
& =q_{2n-4}(X_{2,3})^2-q_{2n-5}(X_{3,3})q_{2n-3}(X_{2,2})=1,
\end{align*}
by induction hypothesis for $n-1$.
\end{proof}

\begin{rem}
{\rm The above formulas (under specialization $x_1=...=x_n=1$) can be seen as a generalization of classical Cassini identities for Fibonacci numbers, i.e., $F_{n-1}F_{n+1}-F_{n}^{2}=(-1)^{n}$. }
\end{rem}

We are ready to prove the following.

\begin{thm}\label{redpoly}
Let $n\in\N_{+}$ and define the quadratic polynomials $F_{n}(a,X_{n}),\linebreak G_{n}(a,X_n)\in\Q(X_{n})[a]$ in the following way
\begin{equation*}
F_{n}(a,X_{n})=[a; \overline{X_{1,1}, 2a}]^2,\quad G_{n}(a,X_n)=[a; \overline{Y_{1,1}, 2a}]^2.
\end{equation*}
Then we have the following equalities
\begin{align*}
F_{n}(a,X_{n})&=\left(a+\frac{q_{2n-2}(X_{1,2})+1}{q_{2n-1}(X_{1,1})}\right)\left(a+\frac{q_{2n-2}(X_{1,2})-1}{q_{2n-1}(X_{1,1})}\right),\\
G_{n}(a,X_{n})&=\left(a+\frac{q_{2n-1}(Y_{1,2})}{q_{2n}(Y_{1,1})}\right)^2+\frac{1}{q_{2n}(Y_{1,1})^2}.
\end{align*}
In particular, for any given $X_{n}\in\N_{+}^{n}$, the polynomial $F_{n}(a,X_{n})$ takes only finitely many prime values for $a\in\N_{+}$.
\end{thm}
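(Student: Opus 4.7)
The plan is to derive both displayed identities from the standard convergent representation of $\sqrt{F_n(a,X_n)}$ and $\sqrt{G_n(a,X_n)}$ after one full period, together with the Cassini-like identities of Theorem~\ref{Cassinilike}, and to then read off the prime-values assertion from the reducibility over $\Q$ that the first identity encodes.

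For the $F_n$ identity, set $\alpha:=\sqrt{F_n(a,X_n)}=[a;\overline{X_{1,1},2a}]$ and trace the continued fraction algorithm through one period. With $\alpha_1=1/(\alpha-a)$, the periodicity relation $\alpha_{2n+1}=\alpha_1$ together with $a_{2n}=2a$ gives $\alpha_{2n}=2a+1/\alpha_1=a+\alpha$. Substituting into the convergent formula $\alpha=(p_{2n-1}\alpha_{2n}+p_{2n-2})/(q_{2n-1}\alpha_{2n}+q_{2n-2})$ and separating the $\Q$-rational and $\Q$-irrational parts yields the pair of equations
\[
p_{2n-1}=a\,q_{2n-1}(X_{1,1})+q_{2n-2}(X_{1,2}),\qquad F_n=a^2+\frac{a\,q_{2n-2}(X_{1,2})+p_{2n-2}}{q_{2n-1}(X_{1,1})},
\]
after using the continuant-reversal identity $q_k(c_1,\ldots,c_k)=q_k(c_k,\ldots,c_1)$ to rewrite $q_{2n-2}(X_{2,1})$ as $q_{2n-2}(X_{1,2})$. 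Evaluating the finite convergent $[a;X_{1,2}]$ independently supplies $p_{2n-2}=a\,q_{2n-2}(X_{1,2})+q_{2n-3}(X_{2,2})$; substituting and applying the first Cassini-like identity of Theorem~\ref{Cassinilike} recasts $F_n$ as a difference of squares which factors into the two displayed linear terms in $a$. The derivation for $G_n$ is entirely parallel, with period length $2n+1$ and $\alpha_{2n+1}=a+\sqrt{G_n}$; here the second Cassini-like identity carries $-1$ in place of $+1$, turning the difference of squares into a sum of squares and producing the extra summand $1/q_{2n}(Y_{1,1})^2$.

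For the \emph{in particular} claim, abbreviate $Q:=q_{2n-1}(X_{1,1})$ and $Q':=q_{2n-2}(X_{1,2})$; clearing denominators in the factorization of $F_n$ yields the integer polynomial identity
\[
Q^2\,F_n(a,X_n)=(aQ+Q'-1)(aQ+Q'+1).
\]
If $F_n(a,X_n)=p$ is prime for some $a\in\N_+$, then with $A:=aQ+Q'-1$ and $B:=aQ+Q'+1$ one has $AB=pQ^2$ for positive integers $A,B$ with $B-A=2$. The prime $p$ divides at least one of $A,B$; assuming $p\mid A$ and writing $A=pk$ with $k\in\N_+$ forces $B=Q^2/k$, whence $k\mid Q^2$ and in particular $B\leq Q^2$. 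This bounds $a$ by $(Q^2-Q'-1)/Q$, and the symmetric case $p\mid B$ is handled identically, so only finitely many $a\in\N_+$ make $F_n(a,X_n)$ prime.

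The main step requiring care is bookkeeping: correctly aligning the sequences $X_{1,1},X_{1,2},X_{2,1},X_{2,2}$ and their reverses with the partial quotients $a_0,a_1,\ldots,a_{2n}$ of $\sqrt{F_n}$, and tracking each continuant-reversal symmetry that is invoked. Once these indexing conventions are pinned down, the remaining algebra reduces to a short chain of routine substitutions.
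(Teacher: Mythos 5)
Your derivation of both identities is essentially the paper's own proof: you set up the quadratic relation for $\theta=[a;\overline{X_{1,1},2a}]$ via the convergent formula after one period, invoke the same continuant identities $p_{2n-1}=aq_{2n-1}(X_{1,1})+q_{2n-2}(X_{1,2})$ and $p_{2n-2}=aq_{2n-2}(X_{1,2})+q_{2n-3}(X_{2,2})$, and finish with Theorem~\ref{Cassinilike}, exactly as the authors do. Your only addition is to spell out the finiteness claim (via $(aQ+Q'-1)(aQ+Q'+1)=pQ^2$ forcing the cofactor $\leq Q^2$ and hence $a$ bounded), which the paper leaves implicit; that argument is correct.
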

\begin{proof}
Because the proofs of the equalities are analogous, we only present the proof of the first one. Let us put $\theta=[a;\overline{X_{1,1},2a}]=[a;X_{1,1},\theta+a]$. Then, $\theta$ is a positive root of the quadratic equation
\begin{equation*}
\theta=\frac{(\theta+a)p_{2n-1}(a,X_{1,1})+p_{2n-2}(a,X_{1,2})}{(\theta+a)q_{2n-1}(X_{1,1})+q_{2n-2}(X_{1,2})}
\end{equation*}
or equivalently,
\begin{equation}\label{quadeq}
\begin{split}
& \theta^2q_{2n-1}(X_{1,1})+\theta[aq_{2n-1}(X_{1,1})+q_{2n-2}(X_{2,1})-p_{2n-1}(a,X_{1,1})]\\
& =ap_{2n-1}(a,X_{1,1})+p_{2n-2}(a,X_{1,2}).
\end{split}
\end{equation}
From the general theory of continued fractions we know that
\begin{equation*}
p_{2n-1}(a,X_{1,1})=aq_{2n-1}(X_{1,1})+q_{2n-2}(X_{2,1})=aq_{2n-1}(X_{1,1})+q_{2n-2}(X_{1,2})
\end{equation*}
and
$$p_{2n-2}(a,X_{1,2})=aq_{2n-2}(X_{1,2})+q_{2n-3}(X_{2,2}).$$
Hence, the equation (\ref{quadeq}) takes the form
\begin{equation*}
\theta^2q_{2n-1}(X_{1,1})=a^2q_{2n-1}(X_{1,1})+2aq_{2n-2}(X_{1,2})+q_{2n-3}(X_{2,2}).
\end{equation*}
We thus obtain
\begin{align*}
F_n(a,X)&=\theta^2=a^2+2a\frac{q_{2n-2}(X_{1,2})}{q_{2n-1}(X_{1,1})}+\frac{q_{2n-3}(X_{2,2})}{q_{2n-1}(X_{1,1})}\\
        &=\left(a+\frac{q_{2n-2}(X_{1,2})}{q_{2n-1}(X_{1,1})}\right)^2-\frac{q_{2n-2}(X_{1,2})^2-q_{2n-1}(X_{1,1})q_{2n-3}(X_{2,2})}{q_{2n-1}(X_{1,1})^2}\\
        &=\left(a+\frac{q_{2n-2}(X_{1,2})}{q_{2n-1}(X_{1,1})}\right)^2-\frac{1}{q_{2n-1}(X_{1,1})^2}\\
        &=\left(a+\frac{q_{2n-2}(X_{1,2})+1}{q_{2n-1}(X_{1,1})}\right)\left(a+\frac{q_{2n-2}(X_{1,2})-1}{q_{2n-1}(X_{1,1})}\right),
\end{align*}
where the equality on the third line holds by Lemma \ref{Cassinilike}.
\end{proof}

\begin{prop}\label{irred}
Under assumption of Hypothesis H, for any given $n\in\N$ not congruent to $1$ modulo $3$ and $X_{n}=(x_1,...,x_n)\in\N_{+}^{n}$ such that $x_1,...,x_n$ are all odd, the polynomial $G_{n}(a,X_{n})$ takes infinitely many prime values for $a\in\N_{+}$.
\end{prop}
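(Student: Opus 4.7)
The plan is to reduce the problem to a single-variable instance of Hypothesis H. Writing $q:=q_{2n}(Y_{1,1})$, $r:=q_{2n-1}(Y_{1,2})$, $s:=q_{2n-2}(Y_{2,2})$, Theorem \ref{redpoly} together with the identity $r^{2}+1=qs$ from Theorem \ref{Cassinilike} gives
$$q^{2}\,G_{n}(a,X_{n})=(aq+r)^{2}+1,\qquad G_{n}(a,X_{n})=a^{2}+\frac{2ar+s}{q},$$
so $G_{n}(a,X_{n})\in\Z$ precisely when $q\mid 2ar+s$.

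A preliminary parity calculation is required. Since all $x_{i}$ are odd, the recurrence for the $q_{k}$'s reduces modulo $2$ to the Fibonacci recurrence with the same initial conditions, hence $q_{k}$ is even if and only if $k\equiv 2\pmod{3}$. The assumption $n\not\equiv 1\pmod{3}$ therefore forces $q$ to be odd. Combined with $\gcd(r,q)=1$ (immediate from $r^{2}+1=qs$), the linear congruence $2ar+s\equiv 0\pmod{q}$ has a unique solution $a\equiv a_{0}\pmod{q}$. Substituting $a=qt+a_{0}$ produces the integer-coefficient polynomial
$$f(t):=G_{n}(qt+a_{0},X_{n})=q^{2}t^{2}+(2qa_{0}+2r)\,t+\left(a_{0}^{2}+\frac{2a_{0}r+s}{q}\right)\in\Z[t].$$

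It remains to apply Hypothesis H to $f$. The leading coefficient $q^{2}$ is positive, and $f$ is irreducible over $\Q$ because $q^{2}f(t)=(q^{2}t+qa_{0}+r)^{2}+1$ has no real root, so any rational factorisation of $f$ would produce one of $q^{2}f$. For the fixed-divisor condition, suppose a prime $p$ divides $f(t)$ for every $t\in\Z$. If $p\nmid q$, then $A(t):=q^{2}t+qa_{0}+r$ is non-constant modulo $p$, so $A(t)^{2}\equiv -1\pmod{p}$ cannot hold for all $t$ (for $p=2$, one uses that $q$ is odd, so $A$ changes parity with $t$). If $p\mid q$, then $p$ is odd (since $q$ is odd), and modulo $p$ the polynomial $f$ reduces to $2rt+f(0)$, which is a genuine linear polynomial because $\gcd(r,q)=1$ gives $p\nmid 2r$. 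Either case yields a contradiction, and Hypothesis H supplies infinitely many $t\in\N_{+}$ with $f(t)$ prime.

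The subtle point is the fixed-divisor check in the case $p\mid q$, and the hypothesis $n\not\equiv 1\pmod{3}$ is indispensable there: when $n\equiv 1\pmod{3}$, the modulus $q$ is even and the congruence $2ar+s\equiv 0\pmod{q}$ may be unsolvable, as part (1) of Lemma \ref{simplelem} illustrates for $n=1$, where $G_{1}(a,X_{1})$ fails to be an integer for any $a\in\N_{+}$.
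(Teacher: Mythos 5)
Your proof is correct and follows essentially the same route as the paper's: use Theorem \ref{redpoly} to reduce integrality of $G_{n}$ to the congruence $2ar+s\equiv 0\pmod{q}$, note that $q_{2n}(Y_{1,1})\equiv F_{2n+1}\equiv 1\pmod 2$ when $n\not\equiv 1\pmod 3$, substitute $a=qt+a_{0}$, and apply Hypothesis H to the resulting one-variable quadratic. Your verification of the Hypothesis H hypotheses is in fact a bit more explicit than the paper's (you check irreducibility via $q^{2}f(t)=(q^{2}t+qa_{0}+r)^{2}+1$ and treat the fixed-divisor case $p\mid q$ separately), but the argument is the same in substance.
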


\begin{proof}
At first, let us note that if $x_1,...,x_n$ are all odd integers, then $q_n(X_n)\equiv F_{n+1}\pmod{2}$, where $F_n$ denotes $n$-th Fibonacci number. Indeed, $q_{-1}=F_0$, $q_0=F_1$ and $q_k(X_k)=x_kq_{k-1}(X_{k-1})+q_{k-2}(X_{k-2})\equiv F_k+F_{k-1}=F_{k+1}\pmod{2}$ for $k\in\N_+$.

From Theorem \ref{redpoly} we know that $G_n(a,X_n)=a^2+\frac{2aq_{2n-1}(X_{1,2})+q_{2n-2}(X_{2,2})}{q_{2n}(X_{1,1})}$. There exists $a_0\in\N$ such that $\frac{2a_0q_{2n-1}(X_{1,2})+q_{2n-2}(X_{2,2})}{q_{2n}(X_{1,1})}\in\Z$. Indeed, the congruence $2aq_{2n-1}(X_{1,2})+q_{2n-2}(X_{2,2})\equiv 0\pmod{q_{2n}(X_{1,1})}$ has a solution in $a\in\Z$ as $\gcd(q_{2n-1}(X_{1,2}),q_{2n}(X_{1,1}))=1$ and $q_{2n}(X_{1,1})\equiv F_{2n+1}\equiv 1\pmod{2}$. Then $\frac{2aq_{2n-1}(X_{1,2})+q_{2n-2}(X_{2,2})}{q_{2n}(X_{1,1})}\in\Z$ if and only if $a=a_0+tq_{2n}(X_{1,1})$ for some $t\in\Z$. Thus
\begin{align*}
\tilde{G}_n(t,X_n):&=G_n(a_0+tq_{2n}(X_{1,1}),X_n)\\
                   &=(a_0+tq_{2n}(X_{1,1}))^2+2tq_{2n-1}(X_{1,2})+\frac{2a_0q_{2n-1}(X_{1,2})+q_{2n-2}(X_{2,2})}{q_{2n}(X_{1,1})}
\end{align*}
and we see that for each $t\in\Z$ the number $\tilde{G}_n(t,X_n)$ is an integer. Because $2\nmid q_{2n}(X_{1,1})$, the polynomial $\tilde{G}_n(t,X_n)$ (with respect to the variable $t$) attains odd values. Since its degree is $2$ and the coefficients near $t^2$ and $t$ are coprime, for each odd prime number $p$ there exists a value $t\in\Z$ such that $p\nmid\tilde{G}_n(t,X_n)$. Moreover, the leading coefficient of $\tilde{G}_n(t,X_n)$ is positive. Hence, assuming validity of Hypothesis H, we conclude that $G_{n}(a,X_{n})$ takes infinitely many prime values for $a\in\N_{+}$.
\end{proof}

Let us denote
$$
L_{1}(m):=\mbox{the number of}\; 1's\;\mbox{in the period of continued fraction expansion of}\;\sqrt{p_{m}}.
$$

It is an interesting question whether the number $L_{1}(m)$ can be somehow compared with $T_{p_{m}}$. It is clear that $0\leq L_{1}(m)/T_{p_{m}}<1$.  Numerical calculations for $m\leq 10^6$ suggest that the set $\{L_{1}(m)/T_{p_{m}}:\;m\in\N_{+}\}$ is dense in $(0,1)$. Unfortunately, we were unable to prove such a statement unconditionally.



\begin{thm}\label{dense}
Under assumption of Hypothesis H, the set
$$
\left\{\frac{L_{1}(m)}{T_{p_{m}}}:\;m\in\N_{+}\right\}
$$
is dense in $[0,1]$.
\end{thm}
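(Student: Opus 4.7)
The plan is to use Proposition \ref{irred} to exhibit, for every $r\in[0,1]$ and $\varepsilon>0$, an infinite family of primes $p$ with $|L_{1}(p)/T_{p}-r|<\varepsilon$. The key idea is that in the identity $\sqrt{p}=[a;\overline{Y_{1,1},2a}]$ coming from Theorem \ref{redpoly}, the palindromic interior $Y_{1,1}=(x_{1},\ldots,x_{n},x_{n},\ldots,x_{1})$ doubles every entry of $X_{n}$, so the number of $1$'s in the period is exactly twice the number of $1$'s among $x_{1},\ldots,x_{n}$, while the trailing entry $2a$ contributes no $1$.

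Given $r\in[0,1]$ and $\varepsilon>0$, I would first choose $n\in\N_{+}$ with $n\not\equiv 1\pmod 3$ and $2/(2n+1)<\varepsilon$, and then pick $k\in\{0,1,\ldots,n\}$ with $|2k/(2n+1)-r|<\varepsilon$. Take $X_{n}=(\underbrace{1,\ldots,1}_{k},\underbrace{3,\ldots,3}_{n-k})$, a tuple all of whose entries are odd, which places us in the hypothesis of Proposition \ref{irred}. Under Hypothesis H that proposition supplies infinitely many $a\in\N_{+}$ for which $p:=G_{n}(a,X_{n})$ is prime; for any such $p$, the formula $G_{n}(a,X_{n})=[a;\overline{Y_{1,1},2a}]^{2}$ together with the uniqueness of the continued fraction expansion of an irrational forces $\sqrt{p}$ to have the displayed expansion as its canonical one.

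Next I would verify that the exhibited period of length $2n+1$ is minimal. Were the minimal period some proper divisor $d$ of $2n+1$, then the entry at position $2n+1$ of the periodic block (which equals $2a$, and is even) would have to coincide with the entry at position $d$ (which equals $x_{d}$ or $x_{2n+1-d}$, in either case odd by our choice of $X_{n}$) — a contradiction. Hence $T_{p}=2n+1$, and the count gives $L_{1}(p)=2k$, so $L_{1}(p)/T_{p}=2k/(2n+1)$, within $\varepsilon$ of $r$. Because Proposition \ref{irred} supplies infinitely many such primes, every target $r$ is approached by infinitely many indices $m$, which gives the density claim.

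I do not expect any step to present a serious obstacle: the bulk of the work is carried by Proposition \ref{irred}, and the combinatorial facts — that the ratio $2k/(2n+1)$ can be made within $\varepsilon$ of $r$ for suitable $n\not\equiv 1\pmod 3$ and $k\leq n$, and that the rationals of this form are dense in $[0,1]$ — are elementary. The most delicate point is the simultaneous demand that every entry of $X_{n}$ be odd (needed to invoke Proposition \ref{irred}) while $2a$ remains even (needed for the parity argument ruling out a shorter period); both are arranged automatically by the structure of the formula, which is what makes the whole construction go through.
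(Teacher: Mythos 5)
Your proposal is correct and follows essentially the same route as the paper: both apply Proposition \ref{irred} to a tuple $X_{n}$ of odd entries exactly $k$ of which equal $1$ (with $n\not\equiv 1\pmod 3$), obtaining infinitely many primes whose period yields the ratio $2k/(2n+1)$, and then use the density of these rationals in $[0,1]$. Your explicit parity argument for the minimality of the period $2n+1$ is a detail the paper leaves implicit, but it does not change the substance of the argument.
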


Before we show the above theorem, let us put
\begin{align*}
\cal{L}_{i}:=&\{p_{m}:\;L_{1}(m)=i\}\\
\cal{L}_{i,n}:=&\{p_{m}:\;L_{1}(m)=i, T_{p_m}=n\}
\end{align*}
for each $i,n\in\N$ and notice the following consequence of Proposition \ref{irred}.

\begin{cor}\label{infty}
If Hypothesis H is true, then the set $\cal{L}_{2i,2n+1}$ is infinite for each $i,n\in\N$ such that $n\not\equiv 1\pmod{3}$. In particular, the set $\cal{L}_{2i}$ is infinite for any $i\in\N$.
\end{cor}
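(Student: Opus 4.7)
The plan is to construct, for each admissible pair $(i,n)$, an explicit tuple $X_{n}\in\N_{+}^{n}$ of odd positive integers for which Proposition~\ref{irred} guarantees that $G_{n}(a,X_{n})$ represents infinitely many primes, and to verify that every such prime lies in $\cal{L}_{2i,2n+1}$. Concretely, in the case $i\leq n$ (the only case in which $\cal{L}_{2i,2n+1}$ can be nonempty, since $L_{1}(m)\leq T_{p_{m}}$), I would take $X_{n}=(x_{1},\ldots,x_{n})$ with $x_{1}=\cdots=x_{i}=1$ and $x_{i+1}=\cdots=x_{n}=3$. All entries are odd, so Proposition~\ref{irred} supplies infinitely many $a\in\N_{+}$ for which $p:=G_{n}(a,X_{n})$ is prime; Theorem~\ref{redpoly} then yields $\sqrt{p}=[a;\overline{Y_{1,1},2a}]$, with displayed period of length $2n+1$.

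The key step is to show that the minimal period $T_{p}$ equals $2n+1$ rather than a proper divisor. Since the block $(y_{1},\ldots,y_{2n},2a)$ repeats with period $2n+1$, we have $T_{p}\mid 2n+1$. If $d<2n+1$ were a proper divisor realising the minimal period, then the entries at positions $d,2d,\ldots,2n+1$ would all coincide, forcing $y_{d}=2a$. However $1\leq d\leq 2n$, and the palindromic form of $Y_{1,1}$ gives $y_{d}=x_{d}$ for $d\leq n$ and $y_{d}=x_{2n+1-d}$ for $n<d\leq 2n$; either way $y_{d}$ is odd, contradicting the parity of $2a$. Hence $T_{p}=2n+1$. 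Counting ones in the periodic block $(x_{1},\ldots,x_{n},x_{n},\ldots,x_{1},2a)$ then yields $2\cdot|\{j:x_{j}=1\}|=2i$, because $2a\geq 2$, so $p\in\cal{L}_{2i,2n+1}$.

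For the ``in particular'' statement: given $i\in\N$, any three consecutive positive integers contain at least two that are not congruent to $1\pmod{3}$, so one may choose $n\geq\max(1,i)$ with $n\not\equiv 1\pmod{3}$, and then $\cal{L}_{2i}\supseteq\cal{L}_{2i,2n+1}$ is infinite by the previous paragraphs.

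The principal obstacle is the minimal-period verification, which works only because every $x_{j}$ is odd---an assumption forced anyway by Proposition~\ref{irred}. The parity asymmetry between odd $x_{j}$ and even $2a$ eliminates every proper divisor of $2n+1$ at one stroke; without this oddness one would need a more delicate divisor-by-divisor analysis of the palindromic structure of $Y_{1,1}$, which would also have to compete with the combinatorial freedom used to prescribe the exact count of ones.
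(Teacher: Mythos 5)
Your proof is correct and follows essentially the same route as the paper: choose $X_{n}$ with all entries odd and exactly $i$ of them equal to $1$, invoke Proposition~\ref{irred} to get infinitely many primes $G_{n}(a,X_{n})$, and count the $2i$ occurrences of $1$ in the palindromic period. Your parity argument showing that $2n+1$ is the \emph{minimal} period (so that these primes really lie in $\cal{L}_{2i,2n+1}$ and not in some $\cal{L}_{2i,d}$ with $d\mid 2n+1$) is a worthwhile detail that the paper's one-line proof leaves implicit.
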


\begin{proof}
For fixed $i,n\in\N$, take $X_n=(x_1,...,x_n)\in\N_+^n$ such that all the values $x_j$ are odd and exactly $i$ of them are equal to $1$. Then the polynomial $G_n(a,X_n)$ represents infinitely many prime numbers and in the period of continued fraction expansion of $\sqrt{G_n(a,X_n)}$ we have exactly $2i$ occurrences of $1$.
\end{proof}

At this moment, the proof of Theorem \ref{dense} is easy.

\begin{proof}[Proof of Theorem \ref{dense}]
Corollary \ref{infty} implies that
$$
\left\{\frac{2i}{2n+1}:\;i,n\in\N, i\leq n, n\not\equiv 1\pmod{3}\right\}\subset\left\{\frac{L_{1}(m)}{T_{p_{m}}}:\;m\in\N_{+}\right\}.
$$
Then, for each $u\in [0,1]$ we have
$$0\leq\inf_{n\in\N, n\not\equiv 1(\bmod{3})}\inf_{i\in\{0,...,n\}}\left|u-\frac{2i}{2n+1}\right|\leq\inf_{n\in\N, n\not\equiv 1(\bmod{3})}\frac{1}{2n+1}=0.$$
This means that $\left\{\frac{2i}{2n+1}:\;i,n\in\N, n\not\equiv 1\pmod{3}\right\}$ is dense in $[0,1]$. As a result, the set $\left\{\frac{L_{1}(m)}{T_{p_{m}}}:\;m\in\N_{+}\right\}$ is dense in $[0,1]$.
\end{proof}

\section{Applications of equidistribution of the sequence $(\{\sqrt{p_{m}}\})_{m\in\N_{+}}$}\label{section4}

Let us recall that a sequence $(u_n)_{n\in\N_+}$ of real numbers is equidistributed modulo $1$ if for each $1\geq\alpha<\beta\leq 1$ $\lim_{N\rightarrow +\infty}\frac{1}{N}\#\{n\in\{1,...,N\}: \{u_n\}\in (\alpha,\beta)\}=\beta-\alpha$, where $\{x\}$ means the fractional part of the number $x$. Then, we state the following result (see \cite[Exercise 2, p. 348 and a comment after Theorem 21.3]{IwaKow}).

\begin{thm}\label{equidist}
The sequence $(\sqrt{p_n})_{n\in\N_+}$ is equidistributed modulo $1$.
\end{thm}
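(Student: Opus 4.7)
The plan is to apply Weyl's equidistribution criterion: in view of the prime number theorem, it suffices to prove that for every nonzero integer $h$,
$$
\sum_{p\le X}e(h\sqrt{p})=o(\pi(X)),
$$
where $e(t):=e^{2\pi i t}$ and the sum is over primes $p\le X$. Passing to the von Mangoldt function (prime powers $p^k$ with $k\ge 2$ are negligible), this reduces to a power-saving estimate
$$
S_h(X):=\sum_{n\le X}\Lambda(n)e(h\sqrt{n})\ll_{h}X^{1-\delta}
$$
for some $\delta>0$. Partial summation then converts this into the required bound on the exponential sum over primes.

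To access $\Lambda$, I would invoke Vaughan's identity (or the Heath--Brown identity) with parameters $U,V$ chosen as small powers of $X$. This decomposes $S_h(X)$ into a bounded number of sums of two kinds. The \emph{Type I} sums take the shape
$$
\sum_{m\le M}a_m\sum_{n\le X/m}e(h\sqrt{mn}),
$$
with $a_m$ divisor-like and $M$ moderately small. The inner sum has phase $f(n)=h\sqrt{mn}$ with $|f''(n)|\asymp |h|\sqrt{m}\,n^{-3/2}$, so van der Corput's second-derivative estimate yields a nontrivial bound. The \emph{Type II} sums are bilinear forms
$$
\sum_{M<m\le 2M}\sum_{N<n\le 2N}a_mb_n\,e(h\sqrt{mn}),\qquad MN\asymp X,
$$
with $M$ in a middle range. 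Applying Cauchy--Schwarz in $m$ and expanding the square yields inner sums of the form
$$
\sum_{M<m\le 2M}e\!\left(h\sqrt{m}\bigl(\sqrt{n}-\sqrt{n'}\bigr)\right),
$$
which, after discarding the diagonal $n=n'$, are again estimated by van der Corput in $m$. Balancing $U,V$ then produces $S_h(X)\ll_h X^{1-\delta}$.

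The main obstacle is the Type II estimate: after Cauchy--Schwarz one needs a bilinear bound in which the second derivative of the phase in $m$ is neither too small (so that van der Corput still saves) nor too large (so that the resulting bound stays below the trivial one). The admissible window for $M$ is narrow, which is precisely why a combinatorial identity for $\Lambda$ -- restricting summation to a useful middle range -- is indispensable. Once the Type I and Type II bounds are combined, Weyl's criterion delivers the equidistribution of $(\sqrt{p_n})_{n\in\N_+}$ modulo~$1$.
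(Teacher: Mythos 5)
The paper does not prove this statement at all: it is quoted as a known result with a citation to Iwaniec--Kowalski, where it is established by precisely the method you outline (Weyl's criterion, a combinatorial identity for $\Lambda$, and van der Corput bounds for the Type I and Type II sums with phase $h\sqrt{mn}$). Your sketch is therefore the standard and correct route; the only caveat is that it is a strategy outline --- the van der Corput computations, the choice of $U,V$, and the verification that the Type II range can actually be covered are asserted rather than carried out, so as written it is a correct plan rather than a complete proof.
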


For $k\in\N_{+}$ we denote the set of primes satisfying the conditions $I_{p,k}=(1,\ldots,1)$ and $I_{p,k+1}\neq (1,\dots,1)$ by $\cal{A}_{k}$. We will show that for each $k\in\N_{+}$ the set $\cal{A}_{k}$ is not only non-empty, but even infinite. Actually, using the result on equidistribution modulo $1$ of square roots of primes, we will prove much stronger fact. Indeed, the above theorem allows us to show a general result on infinitude of the set of prime numbers with prescribed first elements of continued fractions of their square roots. Before we state it, we introduce the notion of relative asymptotic density of a given subset $A$ of prime numbers in the set of all prime numbers $\bbb{P}$ (or relative asymptotic density, for short):
$$d_{\bbb{P}}(A)=\lim_{N\rightarrow +\infty}\frac{1}{N}\#\{n\in\{1,...,N\}: p_n\in A\}=\lim_{t\rightarrow +\infty}\frac{1}{\pi(t)}\#\{p\in\bbb{P}: p\in A, p\leq t\}.$$

In order to simplify the notation, instead of writing $(a_{1},\ldots, a_{k})$, where $k\in\N_{+}$ and $a_{1},\ldots, a_{k}\in\N_{+}$ are given, we will write $\textbf{a}_{k}$.

\begin{thm}
Let $\bbb{P}_{\emph{\textbf{a}}_{k}}$ be the set of all prime numbers $p$ such that $I_{p,k}=(\textcolor{red}{\emph{\textbf{a}}}_{k})$. Then the value $d_{\bbb{P}}(\bbb{P}_{\emph{\textbf{a}}_{k}})$ exists and it is equal to $\frac{1}{(q_k(\emph{\textbf{a}}_{k})+q_{k-1}(\emph{\textbf{a}}_{k-1}))q_k(\emph{\textbf{a}}_{k})}$. In particular, the set $\bbb{P}_{\emph{\textbf{a}}_{k}}$ is infinite.
\end{thm}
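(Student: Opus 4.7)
The plan is to reduce the statement to the equidistribution of $(\sqrt{p_n})_{n \in \N_+}$ modulo $1$ (Theorem \ref{equidist}) by recognising the condition $I_{p,k}=\mathbf{a}_k$ as the event that $\{\sqrt p\}$ lies in an explicit subinterval of $(0,1)$ of length $\frac{1}{(q_k(\mathbf{a}_k)+q_{k-1}(\mathbf{a}_{k-1}))q_k(\mathbf{a}_k)}$.

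First, I would recall the classical description of the so-called fundamental interval of continued fractions. Writing $x=[0;a_1,\ldots,a_k,\alpha_{k+1}]$ with $\alpha_{k+1}\in(1,+\infty)$ and using the standard matrix recurrence, one has $x=\frac{\alpha_{k+1}p_k(\mathbf{a}_k)+p_{k-1}(\mathbf{a}_{k-1})}{\alpha_{k+1}q_k(\mathbf{a}_k)+q_{k-1}(\mathbf{a}_{k-1})}$; as $\alpha_{k+1}$ varies over $(1,+\infty)$, $x$ traces out an open interval $I_{\mathbf{a}_k}\subset(0,1)$ whose endpoints are $\frac{p_k(\mathbf{a}_k)}{q_k(\mathbf{a}_k)}$ and $\frac{p_k(\mathbf{a}_k)+p_{k-1}(\mathbf{a}_{k-1})}{q_k(\mathbf{a}_k)+q_{k-1}(\mathbf{a}_{k-1})}$. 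The determinant identity $p_k(\mathbf{a}_k)q_{k-1}(\mathbf{a}_{k-1})-p_{k-1}(\mathbf{a}_{k-1})q_k(\mathbf{a}_k)=(-1)^{k-1}$ then yields $|I_{\mathbf{a}_k}|=\frac{1}{q_k(\mathbf{a}_k)(q_k(\mathbf{a}_k)+q_{k-1}(\mathbf{a}_{k-1}))}$, and by construction an irrational $x\in(0,1)$ has continued fraction expansion beginning with $[0;a_1,\ldots,a_k]$ if and only if $x\in I_{\mathbf{a}_k}$.

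Next, I would bridge $I_{p,k}$ with $\{\sqrt p\}$. Since the partial quotients of $\sqrt p$ after $a_0=\lfloor\sqrt p\rfloor$ coincide with those of the fractional part $\{\sqrt p\}=[0;a_1,a_2,\ldots]$, whenever $T_p>k$ the condition $I_{p,k}=\mathbf{a}_k$ is equivalent to $\{\sqrt p\}\in I_{\mathbf{a}_k}$. Moreover, for all but finitely many primes $p$ one has $2\lfloor\sqrt p\rfloor>\max(a_1,\ldots,a_k)$, and in this regime $\{\sqrt p\}\in I_{\mathbf{a}_k}$ already forces $T_p>k$, as otherwise the periodic entry $2a_0$ would have to appear among the first $k$ quotients and equal some $a_i$, contradicting the bound. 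Consequently, the symmetric difference between $\bbb{P}_{\mathbf{a}_k}$ and $\{p\in\bbb{P}:\{\sqrt p\}\in I_{\mathbf{a}_k}\}$ is finite and hence irrelevant for the relative density.

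Finally, applying Theorem \ref{equidist} to the interval $I_{\mathbf{a}_k}$ (whose rational endpoints are never hit by the irrational $\{\sqrt{p_n}\}$), I obtain
$$d_{\bbb{P}}(\bbb{P}_{\mathbf{a}_k})=\lim_{N\to\infty}\frac{1}{N}\#\{n\leq N:\{\sqrt{p_n}\}\in I_{\mathbf{a}_k}\}=|I_{\mathbf{a}_k}|=\frac{1}{q_k(\mathbf{a}_k)(q_k(\mathbf{a}_k)+q_{k-1}(\mathbf{a}_{k-1}))},$$
and positivity of this density forces $\bbb{P}_{\mathbf{a}_k}$ to be infinite. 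I do not anticipate any serious technical obstacle; the only delicate point is the possibility of primes with exceptionally short period $T_p\leq k$, and this is dispensed with by the elementary comparison $\lfloor\sqrt p\rfloor>\tfrac12\max_i a_i$ described above.
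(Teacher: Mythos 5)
Your proposal is correct and follows essentially the same route as the paper: identify $\{p : I_{p,k}=\mathbf{a}_k\}$ with the event $\{\sqrt p\}$ lying in the fundamental interval parametrised by $\frac{\theta p_k+p_{k-1}}{\theta q_k+q_{k-1}}$, compute its length via the determinant identity, and invoke the equidistribution of $(\sqrt{p_n})$ modulo $1$. Your extra discussion of primes with period $T_p\leq k$ is actually superfluous (since $I_{p,k}$ is by definition just the first $k$ partial quotients after $a_0$, it always agrees with the first $k$ partial quotients of $\{\sqrt p\}$), but it does no harm.
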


\begin{proof}
By Theorem \ref{equidist} we know that the sequence of square roots of all consecutive prime numbers is equidistributed modulo $1$. Thus, $d_{\bbb{P}}(\bbb{P}_{\textbf{a}_{k}})$ is equal to the length of the interval $\cal{J}_{\textbf{a}_{k}}$ of all non-negative real numbers $<1$ such that the first $k$ elements in their continued fractions are $\textbf{a}_{k}$. Since
$$\cal{J}_{\textbf{a}_{k}}=\left\{\frac{\theta p_k(\textbf{a}_{k})+p_{k-1}(\textbf{a}_{k-1})}{\theta q_k(\textbf{a}_{k})+q_{k-1}(\textbf{a}_{k-1})}: \theta\in [1,+\infty)\right\},$$
hence the length of $\cal{J}_{\textbf{a}_{k}}$ is equal to
\begin{align*} \left|\frac{p_k(\textbf{a}_{k})+p_{k-1}(\textbf{a}_{k-1})}{q_k(\textbf{a}_{k})+q_{k-1}(\textbf{a}_{k-1})}-\frac{p_k(\textbf{a}_{k})}{q_k(\textbf{a}_{k})}\right|&=\frac{|p_{k-1}(\textbf{a}_{k-1})q_k(\textbf{a}_{k})-p_k(\textbf{a}_{k})q_{k-1}(\textbf{a}_{k-1})|}{(q_k(\textbf{a}_{k})+q_{k-1}(\textbf{a}_{k-1}))q_k(\textbf{a}_{k})}\\
&=\frac{1}{(q_k(\textbf{a}_{k})+q_{k-1}(\textbf{a}_{k-1}))q_k(\textbf{a}_{k})}.
\end{align*}
\end{proof}

As a result, we can compute the relative asymptotic density of the set $\cal{A}_k$ in $\bbb{P}$ and conclude infinitude of this set.

\begin{cor}
For each $k\in\N_+$ we have $d_{\bbb{P}}(\cal{A}_k)=\frac{1}{F_{k+3}F_{k+1}}$, where $F_k$ denotes $k$-th Fibonacci number. In particular, the set $\cal{A}_k$ is infinite.
\end{cor}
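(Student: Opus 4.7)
The plan is to express $\mathcal{A}_k$ as a set-theoretic difference of two sets of the form $\mathbb{P}_{\textbf{a}}$ to which the previous theorem applies. Specifically, $\mathcal{A}_k = \mathbb{P}_{(\underbrace{1,\ldots,1}_{k})} \setminus \mathbb{P}_{(\underbrace{1,\ldots,1}_{k+1})}$, so by additivity of relative asymptotic density on disjoint unions (and the fact that the second set is contained in the first),
\[
d_{\mathbb{P}}(\mathcal{A}_k) = d_{\mathbb{P}}(\mathbb{P}_{(1,\ldots,1)_k}) - d_{\mathbb{P}}(\mathbb{P}_{(1,\ldots,1)_{k+1}}).
\]
Each of these densities is given by the preceding theorem in terms of the continuants $q_k$.

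The next step is to identify the continuants evaluated at the all-ones vector with Fibonacci numbers. From $q_{-1}=0$, $q_0=1$ and the recurrence $q_k(1,\ldots,1)=q_{k-1}(1,\ldots,1)+q_{k-2}(1,\ldots,1)$, an easy induction gives $q_k(1,\ldots,1)=F_{k+1}$. Substituting this into the density formula from the previous theorem yields
\[
d_{\mathbb{P}}(\mathbb{P}_{(1,\ldots,1)_k}) = \frac{1}{(F_{k+1}+F_k)F_{k+1}} = \frac{1}{F_{k+2}F_{k+1}},
\]
and analogously $d_{\mathbb{P}}(\mathbb{P}_{(1,\ldots,1)_{k+1}}) = \frac{1}{F_{k+3}F_{k+2}}$.

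The final step is a one-line computation using the Fibonacci recurrence $F_{k+3}-F_{k+1}=F_{k+2}$:
\[
d_{\mathbb{P}}(\mathcal{A}_k) = \frac{1}{F_{k+2}F_{k+1}} - \frac{1}{F_{k+3}F_{k+2}} = \frac{F_{k+3}-F_{k+1}}{F_{k+1}F_{k+2}F_{k+3}} = \frac{F_{k+2}}{F_{k+1}F_{k+2}F_{k+3}} = \frac{1}{F_{k+1}F_{k+3}}.
\]
Since $F_{k+1}F_{k+3}>0$, the density is strictly positive, which immediately implies $\mathcal{A}_k$ is infinite. There is no real obstacle here: everything reduces to the explicit density formula already established and the trivial identification of continuants with Fibonacci numbers; the only thing to be slightly careful about is that one can legitimately subtract the two densities, but this is justified by $\mathbb{P}_{(1,\ldots,1)_{k+1}}\subseteq\mathbb{P}_{(1,\ldots,1)_k}$ and the existence of both limits given by the previous theorem.
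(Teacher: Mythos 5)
Your proposal is correct and follows essentially the same route as the paper: decompose $\cal{A}_k=\bbb{P}_{\textbf{1}^k}\setminus\bbb{P}_{\textbf{1}^{k+1}}$, subtract the two densities from the preceding theorem, identify $q_n(1,\ldots,1)=F_{n+1}$, and telescope via $F_{k+3}-F_{k+1}=F_{k+2}$. The only cosmetic difference is that the paper phrases the subtraction at the level of Lebesgue measures of the intervals $\cal{J}_{\textbf{1}^k}\setminus\cal{J}_{\textbf{1}^{k+1}}$, whereas you subtract the relative densities directly, justified by the inclusion of the corresponding prime sets; both are valid.
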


\begin{proof}
By Theorem \ref{equidist}, $d_{\bbb{P}}(\cal{A}_k)$ is equal to the Lebesgue measure of the set of all non-negative real numbers $<1$ such that the first $k$ elements in their continued fractions are $1$ and $k+1$-st element is other than $1$. This set is $\cal{J}_{\textbf{1}^k}\bs\cal{J}_{\textbf{1}^{k+1}}$, where $\textbf{1}^k:=(\underset{k\mbox{\tiny times}}{\underbrace{1,...,1}})$. From the definition of polynomials $q_n(X_n)$ we see that $q_n(\textbf{1}^n)=F_{n+1}$ for each $n\in\N$. Hence, using the previous theorem, we obtain
\begin{align*}
d_{\bbb{P}}(\cal{A}_k)&=\frac{1}{(q_k(\textbf{1}^{k})+q_{k-1}(\textbf{1}^{k-1}))q_k(\textbf{1}^{k})}-\frac{1}{(q_{k+1}(\textbf{1}^{k+1})+q_{k}(\textbf{1}^{k}))q_{k+1}(\textbf{1}^{k+1})}\\
&=\frac{1}{q_{k+1}(\textbf{1}^{k+1})q_{k}(\textbf{1}^{k})}-\frac{1}{q_{k+2}(\textbf{1}^{k})q_{k+1}(\textbf{1}^{k-1})}=\frac{1}{F_{k+2}F_{k+1}}-\frac{1}{F_{k+3}F_{k+2}}\\
&=\frac{F_{k+3}-F_{k+1}}{F_{k+3}F_{k+2}F_{k+1}}=\frac{F_{k+2}}{F_{k+3}F_{k+2}F_{k+1}}=\frac{1}{F_{k+3}F_{k+1}}.
\end{align*}
\end{proof}

We are also able to estimate relative asymptotic densities of the sets of prime numbers with prescribed not necessarily first consecutive elements of continued fraction expansions of their square roots. Let us denote the $n$-th element of continued fraction expansion of a real number $x$ as $a_n(x)$. Then, define
\begin{align*}
\bbb{P}\left(\begin{array}{ccc}
n_1 & ... & n_s\\
a_1 & ... & a_s
\end{array}\right)=&\{p\in\bbb{P}:\; a_{n_j}(\sqrt{p})=a_j\mbox{ for each }j\in\{1,...,s\}\}\\
E\left(\begin{array}{ccc}
n_1 & ... & n_s\\
a_1 & ... & a_s
\end{array}\right)=&\{x\in [0,1]:\; a_{n_j}(x)=a_j\mbox{ for each }j\in\{1,...,s\}\}.
\end{align*}

\begin{thm}\label{Khinestim}
For each $n,a\in\N_+$ we have $\frac{1}{3a^2}<d_{\bbb{P}}\left(\bbb{P}{n\choose a}\right)<\frac{2}{a^2}$. Moreover, there exist absolute constants $C,\lambda >0$ such that for each $s\in\N_+$ and $n_1,...,n_s,a_1,...,a_s\in\N_+$ we have
\begin{align*}
&\prod_{j=1}^s\left(\frac{\ln\left(1+\frac{1}{a_j(a_j+2)}\right)}{\ln 2}-\frac{C}{a_j(a_j+1)}e^{-\lambda\sqrt{n_j-n_{j-1}-1}}\right)<d_{\bbb{P}}\left(\bbb{P}\left(\begin{array}{ccc}
n_1 & ... & n_s\\
a_1 & ... & a_s
\end{array}\right)\right)\\
<&\prod_{j=1}^s\left(\frac{\ln\left(1+\frac{1}{a_j(a_j+2)}\right)}{\ln 2}+\frac{C}{a_j(a_j+1)}e^{-\lambda\sqrt{n_j-n_{j-1}-1}}\right),
\end{align*}
where we put $n_0=0$.
\end{thm}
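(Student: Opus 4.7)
The plan is to apply Theorem \ref{equidist} to convert the statement about the relative asymptotic density of primes into an equivalent statement about Lebesgue measure on $[0,1]$, and then to estimate that measure by means of the Gauss--Kuzmin--L\'evy theorem in its density form. Because $a_{n_j}(\sqrt{p})$ depends only on $\{\sqrt{p}\}$ and the set $E\bigl(\begin{smallmatrix}n_1 & \dots & n_s \\ a_1 & \dots & a_s\end{smallmatrix}\bigr)$ is a countable disjoint union of intervals, Theorem \ref{equidist} immediately gives
$$
d_{\bbb{P}}\!\left(\bbb{P}\!\left(\begin{array}{ccc} n_1 & \dots & n_s \\ a_1 & \dots & a_s \end{array}\right)\right) =\op{mes}\!\left(E\!\left(\begin{array}{ccc} n_1 & \dots & n_s \\ a_1 & \dots & a_s \end{array}\right)\right),
$$
where $\op{mes}$ denotes Lebesgue measure.

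The analytic input is the quantitative Gauss--Kuzmin--L\'evy theorem in density form: if $T(x)=\{1/x\}$ is the Gauss map on $[0,1]$ and $f_n$ denotes the Lebesgue density of $T^n(x)$ when $x$ is uniformly distributed on $[0,1]$, then there exist absolute constants $C,\lambda>0$ such that
$$
\left|f_n(s)-\tfrac{1}{(1+s)\ln 2}\right|\leq Ce^{-\lambda\sqrt{n}}
$$
uniformly for $s\in [0,1]$ and $n\in\N$. For the single-position claim, observe that $E\binom{n}{a}=\{x\in [0,1]:T^{n-1}(x)\in(1/(a+1),1/a]\}$; integrating the above estimate over this interval of length $1/(a(a+1))$ yields
$$
\left|\op{mes}\bigl(E\tbinom{n}{a}\bigr)-\tfrac{\ln(1+1/(a(a+2)))}{\ln 2}\right|\leq \tfrac{C}{a(a+1)}\,e^{-\lambda\sqrt{n-1}}.
$$
The elementary bounds $u/(1+u)<\ln(1+u)<u$ applied to $u=1/(a(a+2))$ show that the Gauss probability lies strictly between $1/(3a^2)$ and $2/a^2$ with slack; combined with the Gauss--Kuzmin error term this gives the required two-sided bound for $n\geq 2$, while the boundary case $n=1$ is direct since $\op{mes}(E\binom{1}{a})=1/(a(a+1))$.

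For the general $s$-position claim, I would iterate the preceding argument. Writing $E(\dots)=\bigcap_{j=1}^{s}T^{-(n_j-1)}((1/(a_j+1),1/a_j])$ and integrating by stages, each stage consists of pushing the current conditional density of $T^{n_{j-1}}(x)$ forward under $T^{n_j-n_{j-1}-1}$; the quantitative Gauss--Kuzmin--L\'evy theorem applied at each stage yields a factor $\frac{\ln(1+1/(a_j(a_j+2)))}{\ln 2}\pm\frac{C}{a_j(a_j+1)}e^{-\lambda\sqrt{n_j-n_{j-1}-1}}$, and multiplying these factors telescopes into the claimed product. The principal technical obstacle is precisely this iteration: after conditioning on the first $j-1$ prescribed partial quotients, the conditional density of $T^{n_{j-1}}(x)$ is no longer the Lebesgue density, and one must show that it remains comparable to the Gauss density $1/((1+s)\ln 2)$ with a uniformly bounded multiplicative factor, so that the quantitative Gauss--Kuzmin--L\'evy estimate can be reapplied at the next step with the same absolute constants $C$ and $\lambda$. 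This bookkeeping is essentially the standard treatment of mixing rates for the Gauss map, as developed in Khinchin's monograph on continued fractions.
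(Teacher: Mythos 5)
Your overall strategy coincides with the paper's: Theorem \ref{equidist} converts the relative asymptotic density into the Lebesgue measure of the corresponding set $E(\cdots)\subset[0,1]$, and the measure estimates are then those of Khinchin's monograph. The paper simply cites \cite[p.~60 and Theorem 34]{Khi} for the two inequalities, and your sketch of the product estimate (pushing the conditional distribution forward under the Gauss map between consecutive prescribed positions, with the mixing bookkeeping explicitly deferred to Khinchin) is in substance the content of Khinchin's Theorem 34, so for the second assertion you and the paper rest on the same source.

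There is, however, a genuine gap in your derivation of the first assertion, $\frac{1}{3a^2}<d_{\bbb{P}}\left(\bbb{P}{n\choose a}\right)<\frac{2}{a^2}$, which must hold for \emph{every} $n\in\N_+$. You deduce it by combining the Gauss probability $\frac{1}{\ln 2}\ln\left(1+\frac{1}{a(a+2)}\right)$, which indeed lies strictly between $\frac{1}{3a^2}$ and $\frac{2}{a^2}$, with the Gauss--Kuzmin--L\'evy error term $\frac{C}{a(a+1)}e^{-\lambda\sqrt{n-1}}$. But the slack between the Gauss probability and these two bounds is only a fixed numerical multiple of $\frac{1}{a(a+1)}$ (on the lower side at $a=1$ it is $\log_2\frac{4}{3}-\frac{1}{3}\approx 0.082$), whereas $C$ is an uncontrolled absolute constant; for small $n$, say $n=2$ where the error is $\frac{Ce^{-\lambda}}{a(a+1)}$, nothing guarantees that the error is smaller than the slack, so your argument only yields the two-sided bound for all $n$ beyond a threshold depending on $C$ and $\lambda$. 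The correct and entirely elementary route --- the one on p.~60 of Khinchin, which is what the paper cites --- is to note that inside \emph{each} fundamental interval $J$ of rank $n-1$ the relative measure of the points with $a_n=a$ equals $\frac{q_{n-1}(q_{n-1}+q_{n-2})}{(aq_{n-1}+q_{n-2})((a+1)q_{n-1}+q_{n-2})}=\frac{1+t}{(a+t)(a+1+t)}$ with $t=q_{n-2}/q_{n-1}\in[0,1]$, and this quantity is bounded between $\frac{1}{3a^2}$ and $\frac{2}{a^2}$ uniformly in $J$; summing over all $J$ of rank $n-1$ gives the claim for every $n$ with no asymptotic input whatsoever. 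Replacing your Gauss--Kuzmin argument for part one by this fundamental-interval computation closes the gap.
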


\begin{proof}
From Theorem \ref{equidist} we know that the relative asymptotic densities of the sets $\bbb{P}{n\choose a}$ and $\bbb{P}\left(\begin{array}{ccc}
n_1 & ... & n_s\\
a_1 & ... & a_s
\end{array}\right)$ are equal to the Lebesgue measures of the sets $E{n\choose a}$ and\linebreak $E\left(\begin{array}{ccc}
n_1 & ... & n_s\\
a_1 & ... & a_s
\end{array}\right)$, respectively. The estimations of the Lebesgue measures of the latter sets follow from \cite[p. 60 and Theorem 34, p. 85-86]{Khi}.
\end{proof}

As a consequence of the above theorem, we conclude the fact that the set $\cal{L}_0$ of all the prime numbers without $1$ in the period of the continued fraction expansion has relative asymptotic density equal to $0$.

\begin{cor}\label{L0}
We have $d_{\bbb{P}}(\cal{L}_0)=0$.
\end{cor}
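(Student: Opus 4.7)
The plan is to combine the exponentially small error in Theorem~\ref{Khinestim} with the observation that the Gauss--Kuzmin ``probability'' of a partial quotient being $1$ is strictly less than $1$. First, I would note that by periodicity of the continued fraction expansion of $\sqrt{p}$, if $p\in\cal{L}_0$ then $a_i(\sqrt{p})\neq 1$ for \emph{every} $i\in\N_+$; in particular, for any $M,K\in\N_+$, the set $\cal{L}_0$ is contained in the disjoint union
$$\bigsqcup_{(b_1,\ldots,b_K)\in\{2,3,\ldots\}^K}\bbb{P}\left(\begin{array}{ccc}M & \cdots & KM \\ b_1 & \cdots & b_K\end{array}\right).$$
This reduces the problem to controlling the total relative density of all cylinders of length $K$ at spaced positions whose digits avoid $1$.

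Next, I would apply the upper bound of Theorem~\ref{Khinestim} to each cylinder, using the positions $n_j=jM$ so that every gap satisfies $n_j-n_{j-1}-1=M-1$ and every error term carries the same small factor $e^{-\lambda\sqrt{M-1}}$. Summing the resulting product bounds over all tuples $(b_1,\ldots,b_K)$ factors termwise, since the summands are non-negative, yielding an upper bound of the shape
$$\left(\sum_{b\geq 2}\frac{\ln\!\left(1+\tfrac{1}{b(b+2)}\right)}{\ln 2}+e^{-\lambda\sqrt{M-1}}\sum_{b\geq 2}\frac{C}{b(b+1)}\right)^{\!K}.$$
Two short telescoping computations identify these tail series as $1-\log_2(4/3)=\log_2(3/2)$ and $\tfrac12$, respectively, so the base of this exponential equals $\log_2(3/2)+\tfrac{C}{2}e^{-\lambda\sqrt{M-1}}$.

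Since $\log_2(3/2)<1$, one can fix $M$ large enough that this base drops below $1$; call it $\beta<1$. Then the relative asymptotic density of $\cal{L}_0$ is bounded above by $\beta^K$ for every $K\in\N_+$, and letting $K\to\infty$ forces it to be $0$. The step that requires a little care --- and which I expect to be the main technical obstacle --- is the passage from the \emph{countable} disjoint union of cylinder sets to a single density inequality. This can be handled either by first truncating to tuples with $b_j\leq N$ (each finite union of cylinders is Jordan measurable, and Theorem~\ref{equidist} applied to such a set gives density equal to Lebesgue measure), and then sending $N\to\infty$ using the absolutely convergent tail, or by invoking Theorem~\ref{equidist} directly on the set $\{x\in[0,1):a_{jM}(x)\neq 1,\ j=1,\ldots,K\}$, whose boundary in $[0,1)$ is countable and therefore of Lebesgue measure zero.
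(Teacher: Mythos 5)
Your argument is correct and is essentially the paper's own proof: cover $\cal{L}_0$ by cylinder sets at positions spaced $M$ apart, apply the upper bound of Theorem~\ref{Khinestim}, evaluate the two telescoping tail sums as $\log_2(3/2)$ and $\tfrac12$, choose the spacing so the base is $<1$, and let the number of positions tend to infinity. Your extra care about passing from the countable union of cylinders to the density inequality is a welcome refinement of a step the paper treats as immediate, but it does not change the route.
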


\begin{proof}
For each positive integers $s$ and $n_1<...<n_s$, the set $\cal{L}_0$ is a subset of the union $\bigcup_{a_1,...,a_s\geq 2}\bbb{P}\left(\begin{array}{ccc}
n_1 & ... & n_s\\
a_1 & ... & a_s
\end{array}\right)$. Thus, by monotonicity of relative asymptotic density and the upper bound for the value $d_{\bbb{P}}\left(\bbb{P}\left(\begin{array}{ccc}
n_1 & ... & n_s\\
a_1 & ... & a_s
\end{array}\right)\right)$, we obtain the following.
\begin{equation}\label{upperbound}
\begin{aligned}
& d_{\bbb{P}}(\cal{L}_0)\leq\sum_{a_1,...,a_s\geq 2}d_{\bbb{P}}\left(\bbb{P}\left(\begin{array}{ccc}
n_1 & ... & n_s\\
a_1 & ... & a_s
\end{array}\right)\right)\\
& <\sum_{a_1,...,a_s\geq 2}\prod_{j=1}^s\left(\frac{\ln\left(1+\frac{1}{a_j(a_j+2)}\right)}{\ln 2}+\frac{C}{a_j(a_j+1)}e^{-\lambda\sqrt{n_j-n_{j-1}-1}}\right)\\
& =\prod_{j=1}^s\sum_{a_j=2}^{+\infty}\left(\frac{\ln\left(1+\frac{1}{a_j(a_j+2)}\right)}{\ln 2}+\frac{C}{a_j(a_j+1)}e^{-\lambda\sqrt{n_j-n_{j-1}-1}}\right),
\end{aligned}
\end{equation}
where $C,\lambda>0$ are absolute constants and we put $n_0=0$. Let $k\in\N_+$ be so large that $$\frac{\ln\frac{3}{2}}{\ln 2}+\frac{C}{2}e^{-\lambda\sqrt{k}}<1.$$ Put $n_j=j(k+1)$ for $j\in\{1,...,s\}$. Then, the estimation (\ref{upperbound}) takes the form:
\begin{equation}
\begin{aligned}
& d_{\bbb{P}}(\cal{L}_0)<\prod_{j=1}^s\sum_{a_j=2}^{+\infty}\left(\frac{\ln\left(1+\frac{1}{a_j(a_j+2)}\right)}{\ln 2}+\frac{C}{a_j(a_j+1)}e^{-\lambda\sqrt{j(k+1)-(j-1)(k+1)-1}}\right)\\
& =\prod_{j=1}^s\left[\frac{1}{\ln 2}\ln\prod_{a_j=2}^{+\infty}\left(\frac{(a_j+1)^2}{a_j(a_j+2)}\right)+Ce^{-\lambda\sqrt{k}}\sum_{a_j=2}^{+\infty}\left(\frac{1}{a_j}-\frac{1}{a_j+1}\right)\right]\\
& =\left(\frac{\ln\frac{3}{2}}{\ln 2}+\frac{C}{2}e^{-\lambda\sqrt{k}}\right)^s.
\end{aligned}
\end{equation}
Since $s$ was taken arbitrarily and $\frac{\ln\frac{3}{2}}{\ln 2}+\frac{C}{2}e^{-\lambda\sqrt{k}}<1$, we deduce the equality $d_{\bbb{P}}(\cal{L}_0)=0$.
\end{proof}

\begin{rem}
{\rm In fact, by the same way we can prove that for each positive integer $c$ the set of prime numbers without $c$ as a digit in the period of the continued fraction expansion of their square roots has relative asymptotic density equal to $0$.}
\end{rem}

\section{Remarks, numerical results, questions and conjectures}

In the light of our results, for given $k\in\N_{+}$ it is natural to compute the smallest prime $p$ satisfying $I_{p,k}=(1,\ldots,1)$ and $I_{p,k+1}\neq (1,\ldots,1,1)$, i.e., the number $P_{k}=\op{min}\cal{A}_{k}$. We performed numerical search for elements of the set $\cal{A}_{k}$ for $k\leq 20$. Let us describe the strategy we used in our search. Instead of computing the full continued fraction expansion of $\sqrt{p}, p\in\mathbb{P}$, we computed only the first 20 terms in the periodic part of the expansion, i.e., $\sqrt{p}=[a_{0};a_{1},\ldots,a_{20},x']$. Next we checked whether $(a_{1},\ldots,a_{k})=(1,\ldots,1)$ for $k=1,2,\ldots,20$. With this approach we were able to compute the number of elements in the set $\cal{A}_{k}\cap [1,p_{10^7}]$ and its smallest element. In this range there was no elements of the set $\cal{A}_{k}$ for $k=16,\ldots,20$. We thus extended the search up to $2\cdot 10^8$th prime and found the elements of $P_{k}$ for the remaining values of $k$.

In the table below we present results of our search of the prime number $P_{k}$, together with the length of the period $T_{P_{k}}$.
\begin{equation*}
\begin{array}{|l|ll||l|ll|}
\hline
 k & P_{k} & T_{P_{k}}  & k   & P_{k}      & T_{P_{k}}      \\
 \hline
 1 & 3     & 2      & 11  & 111301     & 211          \\
 2 & 31    & 8      & 12  & 1258027    & 1578           \\
 3 & 7     & 4      & 13  & 5325101    & 2067        \\
 4 & 13    & 5      & 14  & 12564317   & 1551        \\
 5 & 3797  & 13     & 15  & 9477889    & 937          \\
 6 & 5273  & 7      & 16  & 47370431   & 6900             \\
 7 & 4987  & 66     & 17  & 709669249  & 58721        \\
 8 & 90371 & 258    & 18  & 1529640443 & 6682      \\
 9 & 79873 & 257    & 19  & 2196104969 & 45875             \\
 10 & 2081 & 11     & 20  & 392143681  & 24087         \\
 \hline
\end{array}
\end{equation*}
\begin{center}Table 1. The smallest prime number $p\in\cal{A}_{k}$ together with the period $T_{p}$.\end{center}
\bigskip
As we can see the behaviour of the function $P_{k}$ is quite irregular (in particular it is not increasing). In the next table we present the number of elements of the set $\cal{A}_{k}\cap [1,p_{10^7}]$.
\begin{equation*}
\begin{array}{|l|l||l|l|}
\hline
    k & |\cal{A}_{k}\cap [1,p_{10^7}]| & k & |\cal{A}_{k}\cap [1,p_{10^7}]| \\
    \hline
    1 & 3333716                        & 11 & 194 \\
    2 & 998469                         & 12 & 69 \\
    3 & 416637                         & 13 & 25 \\
    4 & 154220                         & 14 & 10 \\
    5 & 59424                          & 15 & 5 \\
    6 & 22551                          & 16 & 3 \\
    7 & 8602                           & 17 & 0 \\
    8 & 3278                           & 18 & 0 \\
    9 & 1222                           & 19 & 0 \\
    10 & 481                           & 20 & 0 \\
    \hline
  \end{array}
 \end{equation*}
\begin{center}Table 2. The number of elements in the set $\cal{A}_{k}\cap [1,p_{10^7}], k\in\{1,\ldots,20\}$. \end{center}

We also obtained some numerical data concerning behaviour of the sequence $(T_{p_{m}})_{m\in\N_{+}}$. The main question is how big the number $T_{p_{m}}$ can be. We expect the equality $T_{p_{m}}=O(\sqrt{m}\log m)$. The figures presented below show the behaviour of $T_{p_{m}}$ and the ratio $T_{p_{m}}/\sqrt{m}\log m$ for $m\leq 10^5$.

\begin{figure}[h!]\label{Tpm1} 
       \centering
         \includegraphics[width=3in]{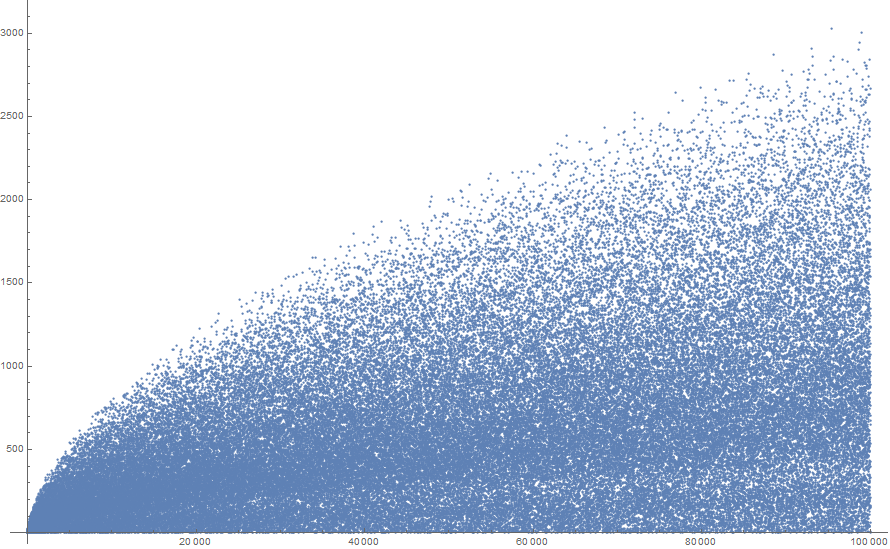}
        \caption{Plot of the function $\protect T_{p_{m}}$}
    \end{figure}

\begin{figure}[h!]\label{Tpm2} 
       \centering
         \includegraphics[width=3in]{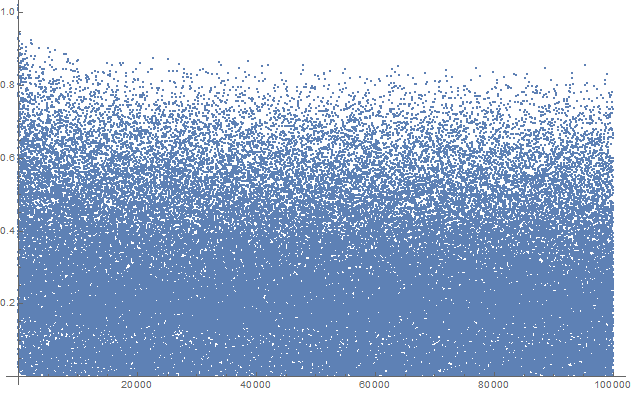}
        \caption{Plot of the function $\protect T_{p_{m}}/\sqrt{m}\log m$}
    \end{figure}
However, numerical computations show that in the range $m\leq 10^7$ there are only two values satisfying $T_{p_{m}}>\sqrt{m}\log m$. There are $m=2$ and $m=4$. This leads us to the following
\begin{ques}
\begin{enumerate}
\item Is the inequality $T_{p_{m}}<\sqrt{m}\log m$ true for all $m\geq 5$?
\item What is the value of the number
$$
\limsup_{m\rightarrow +\infty}\frac{T_{p_{m}}}{\sqrt{m}\log m} ?
$$
\end{enumerate}
\end{ques}

Let us also define:
$$
\cal{W}_{i}:=\{p:\;T_{p}=i\}.
$$

Let us note that there are exactly 35360 values of $i\in\N_{+}$ such that $\cal{W}_{i}\cap\{p_m: m\leq 10^7\}$ is non-empty. Moreover, the smallest ten values of $i\in\N_{+}$ which are not value of $T_{p}$ for $p$ in the considered range are:
$$
31079, 31259, 31399, 31427, 31465, 31621, 31625, 31719, 31754, 31813.
$$

The plot of the number of primes $p_{m}, m\leq 10^7$, such that $T_{p_{m}}=i$ for $i\leq 100$ is given below.

\begin{figure}[htbp] 
       \centering
         \includegraphics[width=3in]{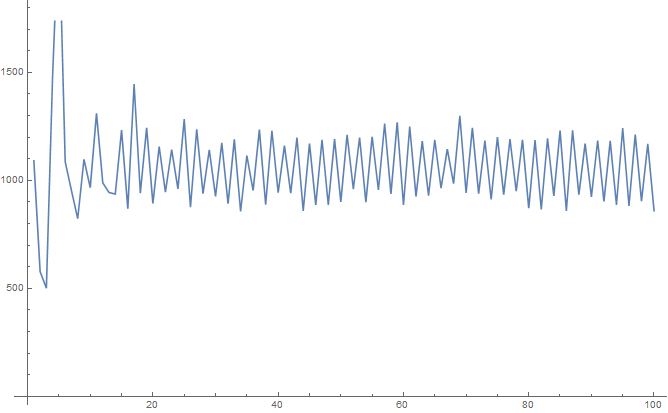}
        \caption{The number of primes $p_{m}, m\leq 10^7$, such that $T_{p_{m}}=i$ for $i\leq 100$}
    \end{figure}

A quick look on the picture presented above suggests the following.

\begin{conj}
For each $i\in \N_{+}$ the limits
$$
\lim_{N\rightarrow +\infty}\frac{\cal{W}_{2i+2}\cap\{p_{1},\ldots,p_{N}\}}{\cal{W}_{2i}\cap\{p_{1},\ldots,p_{N}\}},\quad \lim_{N\rightarrow +\infty}\frac{\cal{W}_{2i+1}\cap\{p_{1},\ldots,p_{N}\}}{\cal{W}_{2i-1}\cap\{p_{1},\ldots,p_{N}\}}
$$
exist and both are equal to 1.
\end{conj}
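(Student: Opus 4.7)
The plan is to combine the exact parametrization of primes by their period length from Theorem \ref{redpoly} with a quantitative Hypothesis H / Bateman--Horn input and then match the asymptotics for $|\cal{W}_k \cap \{p_1,\ldots,p_N\}|$ at two consecutive $k$ of the same parity. The starting observation is that a prime $p$ with $T_p = 2n+1$ must satisfy $p = G_n(a, X_n)$ for a uniquely determined $X_n \in \N_+^n$ (the first half of the palindromic period) and $a = \lfloor \sqrt{p} \rfloor$, while primes with $T_p = 2n$ arise analogously as integer values of $F_n(\cdot, X_n)$. Hence
\begin{equation*}
|\cal{W}_{2n+1} \cap [1,N]| = \sum_{X_n \in \N_+^n} \#\{a \in \N_+ : G_n(a, X_n) \leq N \text{ is prime}\},
\end{equation*}
with an analogous decomposition for $\cal{W}_{2n}$ via $F_n$.

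Next, I would feed each admissible family $G_n(\cdot, X_n)$ (respectively the integral polynomial obtained from $F_n(\cdot, X_n)$ after clearing denominators) into a quantitative Bateman--Horn-type statement, to get an asymptotic
\begin{equation*}
\#\{a \in \N_+ : G_n(a, X_n) \leq N \text{ prime}\} \sim \frac{c_n(X_n)\sqrt{N}}{\log N},
\end{equation*}
where $c_n(X_n)$ is the singular series determined by the local densities of $G_n(\cdot, X_n)$. Since the constraint $G_n(1, X_n) \leq N$ forces $q_{2n}(Y_{1,1}) \ll N^{1/2}$ and hence $n \ll \log N$, the conjectured limit
\begin{equation*}
\frac{|\cal{W}_{2n+3} \cap \{p_1, \ldots, p_N\}|}{|\cal{W}_{2n+1} \cap \{p_1, \ldots, p_N\}|} \longrightarrow 1
\end{equation*}
would reduce to the asymptotic equality of the weighted lattice sums $\sum_{X_n} c_n(X_n)$ and $\sum_{X_{n+1}} c_{n+1}(X_{n+1})$, each restricted to admissible tuples. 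The even-period ratio is handled in parallel via $F_n$.

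The main obstacle is threefold and, in my judgment, decisive. First, not a single non-trivial instance of Hypothesis H is currently proved unconditionally, so the single-family prime-value asymptotic is itself conjectural. Second, the sum over $X_n$ would require a Bateman--Horn asymptotic with an error term uniform in the parameter tuple $X_n$, over a region whose cardinality is a positive power of $N$; nothing of this strength exists even conjecturally. Third, and most fundamentally, the equality of the singular-series sums for consecutive $n$ is essentially a smoothness statement about the distribution of regulators of real quadratic fields $\Q(\sqrt{p})$, since $T_p$ is controlled by the size of the fundamental unit $\epsilon_p$. In effect, the conjecture encodes a refined Cohen--Lenstra-type regularity for real quadratic fields, which is currently far out of reach. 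The most realistic route to a theorem would therefore be to \emph{deduce} the conjecture from a precise conjectural distribution of $(\log \epsilon_p)/\log p$ as $p$ ranges over primes, rather than to attempt a direct counting argument from Theorem \ref{redpoly}.
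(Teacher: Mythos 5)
The statement you are addressing is not a theorem of the paper: it is stated as a conjecture, supported only by the numerical data (the counts of primes $p_m$, $m\le 10^7$, with $T_{p_m}=i$ plotted in Section 5), and the authors offer no proof. Your submission, accordingly, is not a proof either, and you say so yourself; as an assessment of why the statement is out of reach it is broadly sound. Your reduction for odd periods --- decompose $\cal{W}_{2n+1}$ over the half-tuple $X_n$ of the palindromic period, apply a Bateman--Horn asymptotic in $a$ to each family $G_n(\cdot,X_n)$, and compare the resulting singular-series sums for consecutive $n$ --- is the natural heuristic, and your three listed obstacles (no unconditional instance of Hypothesis H, no uniformity in $X_n$ over a range whose size is a power of $N$, and the equivalence with the distribution of regulators of $\Q(\sqrt{p})$) are all real.

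There is, however, one concrete structural error in your framework: the even-period case cannot be ``handled in parallel via $F_n$''. Theorem \ref{redpoly} shows that $F_n(a,X_n)$ factors over $\Q(X_n)$ as a product of two linear polynomials in $a$, and consequently takes only finitely many prime values of $a$ for each fixed $X_n$. So for even periods there is no Bateman--Horn family in the variable $a$ at all; the count $|\cal{W}_{2n}\cap[1,N]|$ is governed by the sum over tuples $X_n$ of a bounded and very restricted quantity (essentially those $a$ for which one of the two rational factors degenerates), and its asymptotic shape in your decomposition is entirely different from the odd case. A separate analysis would be needed, and matching $\cal{W}_{2i+2}$ against $\cal{W}_{2i}$ is then a different problem from matching $\cal{W}_{2i+1}$ against $\cal{W}_{2i-1}$. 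Two smaller gaps: your identity for $|\cal{W}_{2n+1}\cap[1,N]|$ ignores both the integrality condition on $G_n(a,X_n)$ (it is an integer only for $a$ in certain residue classes modulo $q_{2n}(Y_{1,1})$, cf.\ the proof of Proposition \ref{irred}) and the requirement that $2n+1$ be the \emph{minimal} period, so as written it is an overcount. None of this changes the verdict that the conjecture is open and unproved in the paper; it only means that your sketch of what a proof would require is itself not quite right.
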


\bigskip

Let us recall the definition of the set
$$
\cal{L}_{i}:=\{p_{m}:\;L_{1}(m)=i\}.
$$
We took a look on the set $\cal{L}_0$. First of all, we computed all elements in the set $\cal{L}_{0}\cap [1,p_{10^{9}}]$. As we should suppose according to Corollary \ref{L0}, this set is quite small (compared to the set number of elements in the set $\{p_{1},\ldots,p_{10^9}\}$) and equal to
$$
|\cal{L}_{0}\cap [1,p_{10^{9}}]|=25874.
$$
Moreover, we have the equality
$$
\op{max}\{T_{p}:\;p\in \cal{L}_{0}\cap [1,p_{10^{9}}]\}=31.
$$
This number is very small, even when we compare it with the number
$$
\op{max}\{T_{p_{m}}:\;m\leq 10^7\}=40700.
$$
Let
$$
\cal{A}_{0,i}:=\{p\in \cal{L}_{0}:\;T_{p}=i\}.
$$
In Table 3, given below, we present the number, say $A_{i}$, of elements $p\in \cal{A}_{0,i}\cap [1,p_{10^{9}}]$ for $i\in\{1,\ldots, 31\}$.

\begin{equation*}
\begin{array}{|c|llllllllllll|}
\hline
  i     & 1   & 2   & 3   & 5   & 6   & 7   & 9  & 10 & 11 & 13 & 14 & 15 \\
  \hline
  A_{i} &8278 & 4328& 4226& 2696& 2645& 1129& 725& 670& 353& 218& 227& 119\\
  \hline
  \hline
  i     & 17  & 18  & 19  & 21  & 22 & 23 & 25 & 26 & 29 & 30 & 31 & \\
  \hline
  A_{i} & 65  & 67  & 46  & 19  & 21 & 14 & 7  & 12 & 4  & 4  & 1  &\\
  \hline
\end{array}
\end{equation*}
\begin{center}
Table 3. The number of primes in the set $\cal{A}_{0,i}\cap [1,p_{10^{9}}]$ for $i\in\{1,\ldots, 31\}$
\end{center}

In the considered range there is no $p$ such that $T_{p}\equiv 0\pmod{4}$ or $T_{p}=27$, thus, in the table, we omit these values of $i$. In order to compute the values we were interested in, we used a slight modification of the method used in computations of the smallest prime $p$ with $I_{p,k}=(1,\ldots,1)$. This time we first computed the set, say $\cal{B}$, of sequences $I_{p_{m},20}$ without any 1 in it and $m\leq 10^9$. Next, for each $p\in\cal{B}$, we computed the full continued fraction expansion of $\sqrt{p}$, and choose those without 1 in  periodic part. Our computation allows us to compute the smallest element of the set $\cal{A}_{0,i}\cap [1,p_{10^{9}}]$ for $i\leq 31$ not divisible by 4. These numbers together with the continued fraction expansion of its square roots are presented in Table 4.

\begin{equation*}
\begin{array}{|l|l|l|}
\hline
 k & Q_{k} & \mbox{Continued fraction expansion of}\;\sqrt{Q_{k}} \\
\hline
 1 & 2 & [1;\overline{2}] \\
 2 & 11 & [3;\overline{3,6}] \\
 3 & 41 & [6;\overline{2,2,12}] \\
 5 & 89 & [9;\overline{2,3,3,2,18}] \\
 6 & 131 & [11;\overline{2,4,11,4,2,22}] \\
 7 & 1301 & [36;\overline{14,2,2,2,2,14,72}] \\
 9 & 287537 & [536;\overline{4,2,4,2,2,4,2,4,1072}] \\
 10 & 5107 & [71;\overline{2,6,3,4,71,4,3,6,2,142}] \\
 11 & 4649 & [68;\overline{5,2,4,4,27,27,4,4,2,5,136}] \\
 13 & 617801 & [786;\overline{314,2,2,62,2,12,12,2,62,2,2,314,1572}] \\
 14 & 164051 & [405;\overline{31,6,2,4,3,62,405,62,3,4,2,6,31,810}] \\
 15 & 2769149 & [1664;\overline{13,6,2,9,3,16,2,2,16,3,9,2,6,13,3328}] \\
 17 & 40033853 & [6327;\overline{4,3,19,2,3,3,2,3,3,2,3,3,2,19,3,4,12654}] \\
 18 & 975083 & [987;\overline{2,6,4,6,3,8,3,4,987,4,3,8,3,6,4,6,2,1974}] \\
 19 & 20789 & [144;\overline{5,2,3,2,57,4,4,2,11,11,2,4,4,57,2,3,2,5,288}] \\
 21 & 43273913 & [6578;\overline{3,2,3,2,2,10,11,6,3,4,4,3,6,11,10,2,2,3,2,3,13156}] \\
 22 & 1642211 & [1281;\overline{2,20,256,4,51,102,2,512,10,4,1281,4,10,512,2,102,51,4,256,20,2,2562}] \\
 23 & 1799533 & [1341;\overline{2,7,65,3,3,2,6,3,2,5,8,8,5,2,3,6,2,3,3,65,7,2,2682}] \\
 25 & 932718953 & [30540;\overline{2,4,3,2,3,2,2,8,4,12,14,2,2,14,12,4,8,2,2,3,2,3,4,2,61080}] \\
 26 & 104103683 & [10203;\overline{8,4,35,16,600,8,4,1200,8,70,2,16,10203,16,2,70,8,1200,4,8,600,16,35,4,8,20406}]\\
 29 & 47465053 & [6889;\overline{2,21,2,2,6,14,4,7,2,5,4,3,2,5,5,2,3,4,5,2,7,4,14,6,2,2,21,2,13778}] \\
 30 & 102765043 & [10137;\overline{3,4,3,8,100,4,66,132,2,200,4,6,2,6,10137,6,2,6,4,200,2,132,66,4,100,8,3,4,3,20274}] \\
 31 & 10345006913 & [101710;\overline{2,2,5,4,3,6,2,2,2,4,2,7,3,25,10,10,25,3,7,2,4,2,2,2,6,3,4,5,2,2,203420}]\\
 \hline
\end{array}
\end{equation*}
\begin{center} Table 4. The smallest values of $p\in\mathbb{P}$ satisfying $T_{p}=k$ and without 1's in the periodic part $I_{p,T_{p}}$.\end{center}

Based on our computations one can ask the following.

\begin{ques}
Does there exist $p\in \cal{L}_{0}$ satisfying $T_{p}\equiv 0\pmod{4}$?
\end{ques}


\bigskip

In case of the set $\cal{L}_{i}, i\geq 1$, the situation seems to be interesting, too. More precisely, if $i\geq 5$ is odd then the set $\cal{L}_{i}\cap [1,p_{10^{7}}]$ is empty. Moreover,
$$
\cal{L}_{1}\cap [1,p_{10^{7}}]=\{3\},\quad \cal{L}_{3}\cap [1,p_{10^{7}}]=\{7\}.
$$
We also observed that there are 7804 integers $i$ such that there is $q\in\{p_{1},\ldots,p_{10^{7}}\}$ with exactly $i$ occurrences of 1 in the periodic part of the continued fraction expansion of $\sqrt{q}$. The largest number $i$ realizable in this way is 16906. On the other side, the first ten smallest values of $i$ which are not realizable in this way are: $14302, 14774, 14792, 14798, 14826, 14872, 14948, 14960, 15012, 15034$.
In the table below we collect the numbers $|\cal{L}_{i}\cap [1,p_{10^{7}}]|$ for even $i\leq 32$.

\begin{figure}[htbp] 
       \centering
         \includegraphics[width=3in]{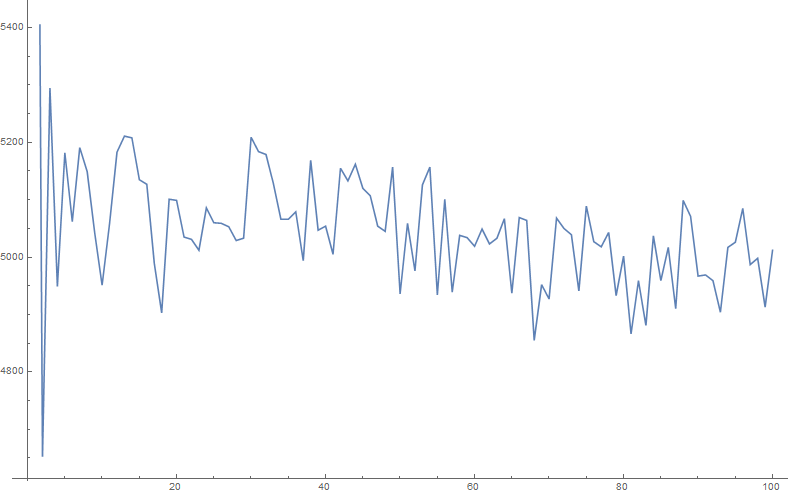}
        \caption{The number of primes $p_{m}, m\leq 10^7$, with exactly $2i$ occurrences of 1 in continued fraction expansion of $\sqrt{p_{m}}$ for $i\leq 100$.}
    \end{figure}

\begin{equation*}
\begin{array}{|c|llllllll|}
\hline
  i                                & 2   & 4   & 6   & 8   & 10  & 12  & 14  & 16  \\
  |\cal{L}_{i}\cap [1,p_{10^{7}}]| & 6814& 4652& 5295& 4949& 5182& 5062& 5191& 5149 \\
  \hline
  \hline
  i                                & 18  & 20  & 22  & 24  & 26  & 28  & 30  & 32 \\
  |\cal{L}_{i}\cap [1,p_{10^{7}}]| & 5043& 4951& 5058& 5183& 5211& 5208& 5135& 5127\\
  \hline
\end{array}
\end{equation*}
\begin{center}Table 5. The number of primes $p\leq 10^7$ such that there is exactly $i$ occurrences of the digit 1 in the continued fraction expansion of $\sqrt{p}$.\end{center}

Based on our numerical calculations we formulate the following
\begin{ques}
Is it true that $\cal{L}_{1}=\{3\}, \cal{L}_{3}=\{7\}$ and $\cal{L}_{2k+1}=\emptyset$ for $k\geq 2$?
\end{ques}

\noindent {\bf Acknowledgments}
We are grateful for remarks of prof. Schinzel concerning the initial form of the manuscript. We thank also the anonymous referee for careful reading of the manuscript and useful remarks improving the presentation .

\bigskip

\vskip 1cm

\noindent Piotr Miska, Jagiellonian University, Faculty of Mathematics and Computer Science, Institute of
Mathematics, {\L}ojasiewicza 6, 30-348 Krak\'ow, Poland; email: piotrmiska91@gmail.com

\noindent Maciej Ulas, Jagiellonian University, Faculty of Mathematics and Computer Science, Institute of
Mathematics, {\L}ojasiewicza 6, 30-348 Krak\'ow, Poland; email: maciej.ulas@uj.edu.pl

\end{document}